\let\origsection=\section \def\section{\@ifstar{\origsection*}{\mysection}} 
\def\mysection{\@startsection{section}{1}\z@{.7\linespacing\@plus\linespacing}{.5\linespacing}{\normalfont\scshape\centering\S}}
\numberwithin{equation}{section}
\numberwithin{figure}{section}
\let\polishlcross=\l
\def\l{\ifmmode\ell\else\polishlcross\fi}
\def\paragraph#1{%
  \smallskip
  \noindent\textbf{#1.}\enspace}
\def\moverlay{\mathpalette\mov@rlay}
\def\mov@rlay#1#2{\leavevmode\vtop{   \baselineskip\z@skip \lineskiplimit-\maxdimen
   \ialign{\hfil$\m@th#1##$\hfil\cr#2\crcr}}}
\newcommand{\charfusion}[3][\mathord]{
    #1{\ifx#1\mathop\vphantom{#2}\fi
        \mathpalette\mov@rlay{#2\cr#3}
      }
    \ifx#1\mathop\expandafter\displaylimits\fi}
\theoremstyle{plain}
\newtheorem{thm}{Theorem}[section]
    \crefname{thm}{Theorem}{Theorems}
\newtheorem{theorem}[thm]{Theorem}
    \crefname{theorem}{Theorem}{Theorems}
\newtheorem{lemma}[thm]{Lemma}
    \crefname{lemma}{Lemma}{Lemmas}
\newtheorem{lem}[thm]{Lemma}
    \crefname{lem}{Lemma}{Lemmas}
\newtheorem{corollary}[thm]{Corollary}
    \crefname{corollary}{Corollary}{Corollaries}
\newtheorem{cor}[thm]{Corollary}
    \crefname{cor}{Corollary}{Corollaries}
\newtheorem{proposition}[thm]{Proposition}
    \crefname{proposition}{Proposition}{Propositions}
\newtheorem{prop}[thm]{Proposition}
    \crefname{prop}{Proposition}{Propositions}
    \crefname{problem}{Problem}{Problems}
    \crefname{conjecture}{Conjecture}{Conjectures}
\newtheorem{observation}[thm]{Observation}
    \crefname{observation}{Observation}{Observations}
\newtheorem{question}[thm]{Question}
    \crefname{question}{Question}{Questions}
\newtheorem*{claim*}{Claim}
    \crefname{claim}{Claim}{Claims}
\newtheorem{clm}{Claim}[]
    \crefname{clm}{Claim}{Claims}
\newtheorem*{case*}{Case}
    \crefname{case}{Case}{Case}
\newtheorem{thm-intro}{Theorem}[]
    \crefname{thm-intro}{Theorem}{Theorems}
\newtheorem{conj-intro}[thm-intro]{Conjecture}
    \crefname{conj-intro}{Conjecture}{Conjectures}
\newtheorem{question-intro}[thm-intro]{Question}
    \crefname{question-intro}{Question}{Questions}
\theoremstyle{definition}
    \crefname{definition}{Definition}{Definitions}
    \crefname{remark}{Remark}{Remarks}
    \crefname{remarks}{Remarks}{Remarks}
    \crefname{situation}{Situation}{Situations}
    \crefname{construction}{Construction}{Constructions}
    \crefname{construction}{Example}{Examples}
\newtheorem*{example*}{Example}
\newenvironment{subproof}[1][Proof.]{%
    \begin{proof}[{#1}]%
        }{%
    \end{proof}}
\renewcommand{\preceq}{\preccurlyeq}
\DeclareFontFamily{U}  {MnSymbolC}{}
\DeclareSymbolFont{MnSyC}         {U}  {MnSymbolC}{m}{n}
\DeclareFontShape{U}{MnSymbolC}{m}{n}{
    <-6>  MnSymbolC5
   <6-7>  MnSymbolC6
   <7-8>  MnSymbolC7
   <8-9>  MnSymbolC8
   <9-10> MnSymbolC9
  <10-12> MnSymbolC10
  <12->   MnSymbolC12}{}
\DeclareMathSymbol{\powerset}{\mathord}{MnSyC}{180}
\let\emptyset=\varnothing
\let\setminus=\smallsetminus
\newcommand*{\abs}[1]{\ensuremath{{\left\lvert {#1} \right\rvert}}}
\newcommand*{\gen}[1]{\ensuremath{{\left\langle {#1} \right\rangle}}}
\newcommand*{\weights}{\ensuremath{\mathbb{R}_{\geq 0}^{V(G)}}}
\newcommand{\norm}[1]{\ensuremath{\left\lVert {#1} \right\rVert}}
\begin{document}
\author[Gollin]{J.~Pascal Gollin$^1$}
\address{$^1$FAMNIT,
University of Primorska, Koper, Slovenia}
\email{\tt pascal.gollin@famnit.upr.si}
\author[Hendrey]{Kevin Hendrey$^2$}
\address{$^2$School of Mathematics, Monash University, Melbourne, Australia}
\email{\tt kevinhendrey@gmail.com}

\author[Huang]{Hao Huang$^3$}
\address{$^3$Department of Mathematics, National University of Singapore,~Singapore}
\email{\tt huanghao@nus.edu.sg}

\author[Huynh]{Tony Huynh$^4$}
\address{$^4$Discrete Mathematics Group, Institute for Basic Science (IBS), Daejeon, South Korea}
\email{\tt tony.@ibs.re.kr}

\author[Mohar]{Bojan Mohar$^{5,6}$}
\address{$^5$Department of Mathematics, Simon Fraser University, Burnaby, Canada}
\address{$^6$Faculty of Mathematics and Physics, University of Ljubljana, Ljubljana, Slovenia}
\email{\tt mohar@sfu.ca}

\author[Oum]{Sang-il Oum$^{4,7}$}
\address{$^7$Department of Mathematical Sciences, KAIST,  Daejeon,~South~Korea}
\email{\tt sangil@ibs.re.kr}

\author[Yang]{Ningyuan Yang$^{8}$}
\address{$^8$School of Mathematical Sciences, Fudan University,  Shanghai, China}
\email{\tt nyyang23@m.fudan.edu.cn}

\author[Yu]{Wei-Hsuan Yu$^{9}$}
\address{$^9$Department of Mathematics, National Central University, Chungli, Taoyuan, Taiwan}
\email{\tt u690604@gmail.com}

\author[Zhu]{Xuding Zhu$^{10}$}
\address{$^{10}$School of Mathematical Sciences, Zhejiang Normal University, Jinhua, China}
\email{\tt xdzhu@zjnu.edu.cn}

\title[Sharing tea on a graph]{Sharing tea on a graph}

\begin{abstract}
    Motivated by the analysis of consensus formation in the Deffuant model for social interaction, we consider the following procedure on a graph~$G$.  
    Initially, there is one unit of  tea at a fixed vertex~${r \in V(G)}$, and all other vertices have no tea.  
    At any time in the procedure, we can choose a connected subset of vertices~$T$ and equalize the amount of tea among vertices in~$T$.  
    We prove that if~${x \in V(G)}$ is at distance~$d$ from~$r$, then~$x$ will have at most~$\frac{1}{d+1}$ units of tea during any step of the procedure. 
    This bound is best possible and answers a question of Gantert. 

    We also consider arbitrary initial weight distributions. 
    For every finite graph~$G$ and~${w \in \weights}$, we prove that the set of weight distributions reachable from~$w$ is a compact subset of~$\weights$.  
\end{abstract}

\keywords{Graphs, Graph Distance, Deffuant model, Social dynamics, Consensus formation}

\subjclass[2020]{05C57, 05C90, 05C22, 91D30, 91B32, 05C63}

\date{}

\maketitle

\section{Introduction}	
\label{sec:intro}

We consider the following procedure on a (possibly infinite) graph~$G$.
Let~$r$ be a fixed vertex of~$G$ and assign one unit of tea to~$r$ and no tea to the other vertices of~$G$.  
A \emph{sharing move} on~$G$ chooses a connected finite set~$T$ of vertices and equalizes the amount of tea among vertices in~$T$. 
A natural question is to determine the maximum amount of tea that we can move to a vertex~$v$  via a sequence of sharing moves. 

To describe the problem rigorously, we define a \emph{weight distribution} on a graph~$G$ as a function~${w \in \weights}$, where the weight~${w(v)}$ of a vertex~$v$ describes the amount of tea at~$v$. 
We denote the set of weight distributions on~$G$ by~$\mathbb{R}_{\geq 0}^{V(G)}$. 
For each~${S \subseteq V(G)}$, we let ${w(S) \coloneqq \sum_{x \in S} w(x)}$, 
and let~${\mathds{1}_S \in \weights}$ denote the weight distribution on~$G$ defined by~${w(x) = 1}$ if~${x \in S}$ and~${w(x) = 0}$ if~${x \in V(G) \setminus S}$. 
For a finite nonempty subset~${T \subseteq V(G)}$ inducing a connected subgraph, a \emph{sharing move} on~$T$ changes~$w$ to~$w'$, where~${w'(v) \coloneqq \frac{w(T)}{\abs{T}}}$ for all~${v \in T}$, and~${w'(v) \coloneqq w(v)}$ for all~${v \notin T}$. 
We say that~${w' \in \weights}$ is \emph{reachable} from~${w \in \weights}$ if there is a finite sequence of sharing moves that changes~$w$ to~$w'$. 

If~${T = \{x,y\}}$ is the set containing the ends of a single edge, then a sharing move on~$T$ is called an \emph{edge-sharing move}. 
We say that~${w' \in \weights}$ is \emph{edge-reachable} from~${w \in \weights}$ if there is a finite sequence of edge-sharing moves that changes~$w$ to~$w'$. 
For~${u,v \in V(G)}$, we let~${d_G(u,v)}$ be the length of a shortest path from~$u$ to~$v$ in~$G$. 

The following is our first main theorem.

\begin{theorem}
    \label{thm:main}
    Let~$G$ be a connected graph and let~${r,v\in V(G)}$. 
    If~${w \in \weights}$ is edge-reachable from~$\mathds{1}_{\{r\}}$, then
    \[
        w(v) \leq \frac{1}{d_G(r,v)+1}.
    \]
\end{theorem}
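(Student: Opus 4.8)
The plan is to recast the dynamics in operator form, extract the right monotone quantity, and reduce the general bound to the case of a path, where a sortedness argument gives the estimate directly. First I would observe that an edge-sharing move on $\{x,y\}$ is the action of a symmetric doubly stochastic matrix (the identity outside the $2\times 2$ averaging block on $x,y$), so any $w$ edge-reachable from $\mathds 1_{\{r\}}$ can be written as $w=P\,\mathds 1_{\{r\}}$ for a product $P$ of such matrices; thus $P$ is doubly stochastic and $w(v)=P_{v,r}$, which is also the probability that a lazy walk from $r$, pushed along the scheduled edges, sits at $v$ at the end. This framing shows at the outset why a purely linear potential cannot work: a move on $\{x,y\}$ changes $\sum_x c(x)w(x)$ by $\tfrac12(c(x)-c(y))(w(y)-w(x))$, whose sign the scheduler controls, so no nonconstant $c$ gives a monotone functional. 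The argument must therefore be nonlinear or comparative.

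The model case is a path $r=z_0,z_1,\dots,z_n$ with $v=z_d$. Here I would prove by induction on the number of moves that the weight is always non-increasing along the path, i.e.\ $w(z_0)\ge w(z_1)\ge\cdots\ge w(z_n)$: averaging an edge $\{z_i,z_{i+1}\}$ replaces $w(z_i),w(z_{i+1})$ by their mean, which lies between them and hence preserves the sorted order, while no other move touches these two coordinates. Given sortedness,
\[
1=\sum_{i=0}^{n} w(z_i)\ \ge\ \sum_{i=0}^{d} w(z_i)\ \ge\ (d+1)\,w(z_d),
\]
which is exactly the claimed bound and explains why it is attained only in the uniform limit.

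For a general graph I would try to dominate the process by this path process through the distance layering. Writing $\ell(x)=d_G(r,x)$ and $M_k=\max_{\ell(x)\ge k} w(x)$, one checks that intra-layer moves and moves between two layers of index $\ge k$ can only decrease $M_k$, whereas a move crossing from layer $k-1$ into layer $k$ can raise $M_k$ only up towards $\tfrac12(M_{k-1}+M_k)$. Since $\ell(v)=d$ gives $w(v)\le M_d$, and since $M_0\ge M_1\ge\cdots$ automatically, it would suffice to establish $\sum_{k=0}^{d}M_k\le 1$, for then $(d+1)M_d\le 1$. Concretely I would maintain a sorted profile $\widehat w_0\ge\widehat w_1\ge\cdots$, initialized at $\mathds 1_{\{0\}}$, that dominates $(M_k)_k$ coordinatewise and evolves \emph{only} by path-averaging moves — thereby staying sorted with total mass $1$ — applying the path move on $\{k-1,k\}$ whenever a graph move threatens to raise $M_k$.

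I expect this domination to be the main obstacle, and a naive implementation fails: the response move on $\{k-1,k\}$ also lowers $\widehat w_{k-1}$, which can destroy $\widehat w_{k-1}\ge M_{k-1}$ when the layer-$(k-1)$ maximum is realized at a vertex not involved in the current move. Indeed, two equal maxima in a layer let a crossing move raise the next layer's maximum with no compensating drop, so $\sum_k M_k$ is \emph{not} by itself monotone. Overcoming this is the crux, and I would replace single-coordinate maxima by a majorization-type comparison: sort $w$ within the BFS layering and show that this layered rearrangement is majorized, at every step, by the sorted path profile, from which the bound on $M_d$ follows. A complementary route uses the compactness of the reachable set proved later in the paper — passing to a distribution maximizing $w(v)$ and deducing the extremal structure from local optimality — but since for edge-moves the value $\tfrac{1}{d+1}$ is only approached, this requires working with the closure and recovers merely the non-strict inequality.
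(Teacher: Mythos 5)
Your reduction to the path case is correct and is essentially H\"aggstr\"om's original argument (sortedness of the profile is preserved by averaging adjacent entries, whence $1\ge\sum_{i=0}^{d}w(z_i)\ge(d+1)w(z_d)$), but the theorem is about arbitrary connected graphs, and the step that would carry the bound from the path to a general graph is exactly the step you do not supply. You candidly observe that the invariant $\sum_{k\le d}M_k\le 1$ with $M_k=\max_{\ell(x)\ge k}w(x)$ is \emph{not} preserved (a move crossing from layer $k-1$ into layer $k$ at a non-maximal vertex of layer $k-1$ can raise $M_k$ with no compensating drop), and you then propose to repair this by a ``majorization-type comparison'' of the layered rearrangement against the sorted path profile. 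That repair is asserted, not proved, and it inherits the same defect: the partial sums $\sum_{i\le j}M_i$ for $j\ge k$ can strictly increase under such a crossing move while the corresponding partial sums of your comparison path profile do not, so the proposed domination is not visibly an invariant either. The closing appeal to compactness and local optimality is likewise only a sketch and, as you note, would at best recover the closed statement without identifying the extremal structure. So the proposal establishes the theorem only when $G$ is itself a path.

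For comparison, the paper resolves precisely this difficulty by strengthening the induction hypothesis in a different direction: instead of tracking layer maxima, it tracks \emph{all finite vertex sets} at once, proving that every edge-reachable $w$ satisfies $w(S)\le w(G)\bigl(1-\prod_{x\in S}\tfrac{d_G(r,x)}{d_G(r,x)+1}\bigr)$ for every finite $S\subseteq V(G)$ (Proposition~\ref{prop:main}). This multiplicative set functional is preserved by a single edge-sharing move via the elementary inequality $1+\tfrac{a}{a+1}\cdot\tfrac{b}{b+1}\ge\tfrac{2a}{a+1}$ for integers $a,b\ge 0$ with $|a-b|\le 1$, and the theorem follows by taking $S=\{v\}$. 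The missing idea in your proposal is this nonlinear, set-indexed potential; your operator-theoretic framing and the observation that no linear potential can work are both sound, but they do not by themselves locate the correct monotone quantity. If you want to pursue your multiple-source viewpoint instead, the paper's Section~\ref{sec:second} shows how to make a single-target version rigorous by comparing against sharing moves along shortest paths ordered by weight, which may be closer in spirit to what you were attempting.
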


This answers an open problem raised by Nina Gantert~\cite{gantert2024}. 
It is easy to show that if edge-sharing moves are chosen uniformly at random, then the weight distribution will converge to the uniform distribution. 
The precise mixing time of this Markov chain has received considerable attention. 
See~\cite{MSW22} and the references therein, which trace the mixing time question back to a question of Jean Bourgain from the early 1980s. For us, the precise mixing time is not needed.  The  fact that edge sharing moves can approximate sharing moves is already enough to imply the following corollary of~\Cref{thm:main}. 
\

\begin{cor}
    \label{cor:main}
    Let~$G$ be a connected graph and let~${r,v\in V(G)}$. 
    If~${w \in \weights}$ is reachable from~$\mathds{1}_{\{r\}}$, then
    \[
        w(v) \leq \frac{1}{d_G(r,v)+1}.
    \]
\end{cor}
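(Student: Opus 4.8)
The plan is to show that every weight distribution reachable from~$\mathds{1}_{\{r\}}$ lies in the closure, inside the finite-dimensional space spanned by the coordinates that are ever touched, of the set of distributions that are \emph{edge}-reachable from~$\mathds{1}_{\{r\}}$. Since the evaluation~${u \mapsto u(v)}$ is continuous and the condition~${u(v) \leq \frac{1}{d_G(r,v)+1}}$ is closed, \Cref{thm:main} then passes to the limit and yields the corollary. The key observation enabling this is that a sharing move preserves the total weight and never increases the maximum coordinate, so starting from~$\mathds{1}_{\{r\}}$ every distribution encountered has total weight~$1$, all coordinates in~$[0,1]$, and support inside the finite vertex set~$U$ consisting of~$r$ together with the sets used by the moves. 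Hence the entire argument takes place inside the compact simplex~${\Delta \coloneqq \{ u \in \weights : \operatorname{supp}(u) \subseteq U,\ \sum_{x} u(x) = 1\}}$ in a finite-dimensional space.

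The main step is the following approximation claim: for a finite connected set~$T$, the sharing move~$F_T$ on~$T$ is a uniform-on-$\Delta$ limit of finite sequences of edge-sharing moves supported on edges of~$G[T]$. To prove this, I would fix a sweep~${M = A_{e_m} \cdots A_{e_1}}$, where~${e_1, \dots, e_m}$ are the edges of a spanning connected subgraph of~$G[T]$ and~$A_e$ is the doubly stochastic matrix averaging the two coordinates of~$e$ and fixing the rest. Because~$G[T]$ is connected and each~$A_e$ has strictly positive diagonal, the product~$M$ is irreducible with positive diagonal, hence primitive; being doubly stochastic, Perron--Frobenius gives~${M^N \to \tfrac{1}{\abs{T}} J_T}$ as~${N \to \infty}$, where~$J_T$ denotes the all-ones averaging on the~$T$-block. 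Equivalently, the potential~${\sum_{t \in T} (u(t) - \bar u)^2}$ with~${\bar u = u(T)/\abs{T}}$ is invariant in~$\bar u$, non-increasing under every edge-sharing move, and driven to~$0$ along the sweeps. Since~$M^N$ converges to~$F_T$ as a linear map on~$\mathbb{R}^U$, the convergence is uniform on the bounded set~$\Delta$.

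It remains to lift this from a single move to a whole sequence~${F_{T_k} \circ \cdots \circ F_{T_1}}$ realizing~${w = F_{T_k}(\cdots F_{T_1}(\mathds{1}_{\{r\}}))}$. Each sharing move is nonexpansive in the~$\ell^\infty$-norm, since an average of values differs from the corresponding average by at most the largest coordinatewise difference. Given~${\varepsilon > 0}$, I would choose for each~$j$ an edge-sharing approximation~$E_j$ of~$F_{T_j}$ with~${\norm{E_j(u) - F_{T_j}(u)}_\infty < \varepsilon/k}$ for all~${u \in \Delta}$, which is possible by the claim. Writing~$p_j$ and~$q_j$ for the exact and approximate distributions after~$j$ steps (with~${p_0 = q_0 = \mathds{1}_{\{r\}}}$), the triangle inequality together with nonexpansiveness of~$F_{T_j}$ gives~${\norm{q_j - p_j}_\infty \leq \varepsilon/k + \norm{q_{j-1} - p_{j-1}}_\infty}$, so~${\norm{q_k - w}_\infty < \varepsilon}$. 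Each~$q_k$ is edge-reachable from~$\mathds{1}_{\{r\}}$, whence~${q_k(v) \leq \frac{1}{d_G(r,v)+1}}$ by \Cref{thm:main}, and letting~${\varepsilon \to 0}$ gives the desired bound on~$w(v)$.

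I expect the only substantive difficulty to be the approximation claim, namely that iterated edge-averaging over a connected set converges to the flat average; this is precisely the statement that the averaging Markov chain on~$G[T]$ mixes, and the role of the primitivity (or of the strictly decreasing potential) is to rule out periodic obstructions. By contrast, the telescoping that upgrades one move to an arbitrary finite sequence is routine, once one records that sharing moves are nonexpansive and that everything stays inside the compact set~$\Delta$.
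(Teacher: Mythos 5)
Your proposal is correct and follows essentially the same route as the paper, which deduces the corollary from \Cref{thm:main} together with the fact that edge-sharing moves can approximate sharing moves and then lets the error tend to zero. You have simply supplied the details the paper leaves implicit --- the Perron--Frobenius mixing argument showing a sweep of edge-averagings over a connected set converges to the flat average, and the $\ell^\infty$-nonexpansiveness used to telescope the error over a finite sequence of moves --- and these details are sound.
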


\begin{proof}
By~\Cref{thm:main} and the fact that edge sharing moves can approximate sharing moves, we have
\[
        w(v) \leq \frac{1}{d_G(r,v)+1}+\epsilon,
    \]
for every $\epsilon>0$. Thus, we are done by letting $\epsilon \to 0$.   
\end{proof}

The upper bound in~\Cref{cor:main} is best possible because we can perform a single sharing move on the vertex set of a shortest path from~$r$ to~$v$.

The edge-sharing procedure was introduced by H\"{a}ggstr\"{o}m~\cite{Haggstrom2012}, who called it \emph{Sharing a drink} (SAD). 
In~\cite{Haggstrom2012}, the SAD-procedure is used to analyze the \emph{Deffuant model}~\cite{DNAW2000} for consensus formation in social networks. 
In this model, opinions are represented by real numbers, and when two agents interact, they move their opinions closer to each other, provided that their opinions are not too far apart. 
The model of edge-sharing also has applications in water resource management \cite{HH2015, HH2019}. 

Originally, the SAD-procedure was only considered on the two-way infinite path, but it clearly generalizes to any graph~(\cite{HH2015, HH2019}). 
H\"{a}ggstr\"{o}m~\cite{Haggstrom2012} proved~\Cref{thm:main} in the special case when~$G$ is the two-way infinite path, and Shang~\cite{shang} generalized it to all infinite $k$-regular trees. 

We believe that the SAD-procedure is of independent interest, and as far as we know,
~\Cref{thm:main} is the only result on the SAD-procedure for arbitrary graphs. 
Roughly speaking,~\Cref{thm:main} provides a precise limit that a malicious agent can spread an extreme opinion over any social network in the Deffuant model. 

We present two proofs of~\Cref{thm:main}. 
The first proof is shorter and uses a stronger inequality for the sum of weights for an arbitrary set, which allows us to study a single-source multiple-target problem. 
The second proof has a different viewpoint, based on proving a stronger statement for a multiple-source single-target problem. The second proof currently only works for finite graphs, but may be easier to generalize to arbitrary weight distributions. 

In~\Cref{sec:appendix,sec:appendix2}, we give strengthened versions of our two proofs which directly work for sharing moves instead of edge sharing moves.  In other words, we also give two direct proofs of~\Cref{cor:main}, which avoid using~\Cref{thm:main}.

We also consider the case of arbitrary initial weight distributions. 
It is not obvious that the maximum amount of tea at a vertex (maximum weight at a vertex) can always be achieved after a finite number of sharing moves. 
Indeed, this turns out to be false for infinite graphs, but true for finite graphs. 
The result for finite graphs follows from a much more general result. 
We prove that for every finite graph~$G$, the set of weight distributions reachable from a fixed weight distribution~${w \in \weights}$ is a compact subset of~$\weights$ under the standard topology on~$\mathbb{R}_{\ge0}^{V(G)}$.
These results are presented in~\Cref{sec:reachable}. 
We present some duality theorems in~\Cref{sec:duality} and some concluding remarks in~\Cref{sec:discussions}.

\section{The multiple target approach}
\label{sec:first}

For integers~$a$ and~$b$, we let~${[a,b] \coloneqq \{z \in \mathbb{Z} \mid a \leq z \leq b\}}$ and~${[a] \coloneqq [1,a]}$. 
Let~$G$ be a graph, let~${w \in \weights}$, and let~${r \in V(G)}$ be a fixed vertex. We will use~${w(G)}$ as shorthand for~${w(V(G))}$. 
For a finite set~${S \subseteq V(G)}$, let~$\rho(S)$ denote~${\prod_{x \in S} \frac{d_G(r,x)}{d_G(r,x)+1}}$. 
Note that $\rho(S)=0$ whenever $r\in S$, because $d_G(r,r)=0$.
We say that~${w \in \weights}$ is \emph{$r$-feasible}, if for every finite set~${S \subseteq V(G)}$, 
\begin{equation}
    \label{eq:set}
     w(S) \leq w(G) \left( 1 - \rho(S) \right).
\end{equation}
Clearly, the weight distribution~$\mathds{1}_{\{r\}}$ is $r$-feasible. 
Therefore, the following proposition immediately implies~\Cref{thm:main} by taking~${S \coloneqq \{v\}}$ in~(\ref{eq:set}). 

\begin{proposition}
    \label{prop:main}
    Let~$G$ be a connected graph, let~${r \in V(G)}$, and let~${w_0 \in \weights}$ be $r$-feasible.  
    Then every~${w \in \weights}$ on~$G$ that is edge-reachable from~$w_0$ is $r$-feasible. 
\end{proposition}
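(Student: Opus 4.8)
To prove Proposition~\ref{prop:main}, the plan is to show that $r$-feasibility is preserved under a single edge-sharing move; induction on the number of moves then completes the argument. So I fix an $r$-feasible weight distribution $w$, an edge $\{a,b\}$ of $G$, and let $w'$ be the result of the edge-sharing move on $\{a,b\}$, so that $w'(a)=w'(b)=\frac{1}{2}(w(a)+w(b))$ and $w'$ agrees with $w$ elsewhere. Note that $w'(G)=w(G)$, so the right-hand side of~(\ref{eq:set}) is unchanged. I must verify~(\ref{eq:set}) for $w'$ and every finite $S\subseteq V(G)$.

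The key observation is that $w'(S)=w(S)$ unless $S$ contains exactly one of $a,b$. By symmetry, assume $a\in S$ and $b\notin S$. Then $w'(S)=w(S)-w(a)+\frac{1}{2}(w(a)+w(b))=w(S)+\frac{1}{2}(w(b)-w(a))$. The idea is to compare against the $r$-feasibility inequality for the modified set $S'\coloneqq (S\setminus\{a\})\cup\{b\}$, which swaps $b$ in for $a$. Writing $P\coloneqq\rho(S\setminus\{a\})=\rho(S'\setminus\{b\})$ for the common product over the shared vertices, we have $\rho(S)=P\cdot\frac{d_G(r,a)}{d_G(r,a)+1}$ and $\rho(S')=P\cdot\frac{d_G(r,b)}{d_G(r,b)+1}$. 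Applying~(\ref{eq:set}) to both $S$ and $S'$ (for the original $r$-feasible $w$) gives two bounds; averaging them should produce a bound on $\tfrac{1}{2}(w(S)+w(S'))=w(S)+\tfrac{1}{2}(w(b)-w(a))=w'(S)$ of the form $w(G)\bigl(1-\tfrac{1}{2}P(\tfrac{d_G(r,a)}{d_G(r,a)+1}+\tfrac{d_G(r,b)}{d_G(r,b)+1})\bigr)$.

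To finish, I need this averaged bound to be at most $w(G)(1-\rho(S))=w(G)(1-P\frac{d_G(r,a)}{d_G(r,a)+1})$, which reduces to the scalar inequality $\frac{d_G(r,a)}{d_G(r,a)+1}\le\frac{1}{2}\bigl(\frac{d_G(r,a)}{d_G(r,a)+1}+\frac{d_G(r,b)}{d_G(r,b)+1}\bigr)$, i.e. $\frac{d_G(r,a)}{d_G(r,a)+1}\le\frac{d_G(r,b)}{d_G(r,b)+1}$. This holds precisely when $d_G(r,a)\le d_G(r,b)$, since $t\mapsto\frac{t}{t+1}$ is increasing. The main obstacle is therefore choosing the orientation correctly: the argument forces me to designate $a$ as the closer endpoint, but in general an edge-sharing move on $\{a,b\}$ might move tea from the vertex farther from $r$ toward the closer one.

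The resolution is that~(\ref{eq:set}) must be checked as a single inequality comparing $w'(S)$ to $w(G)(1-\rho(S))$, and the averaging step already incorporated both $S$ and $S'$ symmetrically; so I should instead bound $w'(S)$ directly using only the inequality for whichever of $S,S'$ gives the correct sign. Concretely, if $d_G(r,a)\le d_G(r,b)$ then $\rho(S)\le\rho(S')$ and $w'(S)\le w(S')$ (as the move does not increase the weight on the lower-indexed endpoint beyond the average), so~(\ref{eq:set}) for $S'$ yields $w'(S)\le w(S')\le w(G)(1-\rho(S'))\le w(G)(1-\rho(S))$; the case $d_G(r,b)\le d_G(r,a)$ is symmetric, using~(\ref{eq:set}) for $S$ together with the monotonicity of $\rho$. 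The crux of the whole proof is thus the elementary monotonicity of $\frac{t}{t+1}$ combined with choosing, for each $S$, the comparison set whose $\rho$-value dominates. Since $\{a,b\}$ is an edge, $\lvert d_G(r,a)-d_G(r,b)\rvert\le 1$, which I expect makes the relevant inequalities tight exactly at the extremal configurations and confirms the bound cannot be improved.
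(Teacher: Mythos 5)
Your setup and the exact identity $w'(S)=\tfrac12\bigl(w(S)+w(S')\bigr)$ with $S'=(S\setminus\{a\})\cup\{b\}$ are correct, and averaging (\ref{eq:set}) over $S$ and $S'$ does settle the orientation $d_G(r,a)\le d_G(r,b)$. The gap is in the other orientation. When $d_G(r,b)=d_G(r,a)-1$, the averaged bound $w(G)\bigl(1-\tfrac12(\rho(S)+\rho(S'))\bigr)$ is strictly larger than the target $w(G)(1-\rho(S))$, and the fallback you propose --- bounding $w'(S)$ by $w(S)$ or by $w(S')$ --- compares \emph{weights}, not distances: $w'(S)\le w(S)$ holds iff $w(b)\le w(a)$, and $w'(S)\le w(S')$ holds iff $w(a)\le w(b)$, and neither of these follows from any inequality between $d_G(r,a)$ and $d_G(r,b)$. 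Concretely, let $G$ be the path $r,b,a$, let $w(r)=\tfrac13$, $w(b)=\tfrac12$, $w(a)=\tfrac16$ (this $w$ is $r$-feasible), and share on the edge $ab$, so $w'(a)=w'(b)=\tfrac13$. Take $S=\{a\}$. The required bound is $w'(a)\le\tfrac13$, which holds with equality; but $w'(S)=\tfrac13>w(S)=\tfrac16$, so the fallback fails, and averaging the feasibility inequalities for $S$ and $S'$ only gives $w'(S)\le\tfrac5{12}$. Neither branch of your argument closes this case.

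The repair used in the paper is to average over a different pair of sets, namely $S\setminus\{a\}$ and $S\cup\{b\}$, via the equally exact identity $w'(S)=\tfrac12\bigl(w(S\setminus\{a\})+w(S\cup\{b\})\bigr)$. Writing $a'=d_G(r,a)$ and $b'=d_G(r,b)$, feasibility of $w$ for these two sets gives
\[
w'(S)\;\le\; w(G)\left(1-\tfrac12\,\rho(S\setminus\{a\})\left(1+\frac{a'}{a'+1}\cdot\frac{b'}{b'+1}\right)\right),
\]
and the scalar inequality $1+\frac{a'}{a'+1}\cdot\frac{b'}{b'+1}\ge\frac{2a'}{a'+1}$ holds for all non-negative integers with $\lvert a'-b'\rvert\le1$ (with equality when $b'=a'-1$), which yields $w'(S)\le w(G)(1-\rho(S))$ in \emph{both} orientations. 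The reason this works where your swap pair does not is that the multiplicative structure of $\rho$ turns the subset/superset pair into a geometric-mean-type inequality, which is insensitive to the sign of $a'-b'$, whereas your arithmetic average of $\frac{a'}{a'+1}$ and $\frac{b'}{b'+1}$ is not. In the example above this pair is $\emptyset$ and $\{a,b\}$, and the averaged bound is exactly $\tfrac13$.
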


\begin{proof}
    By normalizing, we may assume that~${w_0(G) = 1}$. 
    Let~${(w_0, \dots, w_t)}$ be a sequence of weight distributions on~$G$ such that~${w_t = w}$ and~$w_i$ is obtained from~$w_{i-1}$ by a single edge-sharing move for each~${i \in [t]}$. 
    
    We proceed by induction on~$t$. 
    We may assume that~${t \geq 1}$; 
    otherwise there is nothing to prove. 
    Let~${w' = w_{t-1}}$. 
    By induction, $w'$ is $r$-feasible. 
    Suppose that~$w$ is obtained from~$w'$ by sharing on~${xy \in E(G)}$. 
    We will show that~$w$ is also $r$-feasible. 
    Let~$S$ be an arbitrary finite subset of~${V(G)}$. 
    If~${\{x,y\} \subseteq S}$ or~${\{x,y\} \cap S = \emptyset}$, then~${w(S) = w'(S)}$. 
    Thus, by symmetry, we may assume~${x \in S}$ and~${y \notin S}$.  
    Therefore,

    \begin{align*}
        w(S)
        &= \frac{1}{2} \left( w'(S-x) + w'(S+y) \right) \\
        &\leq \frac{1}{2} \left( 1- \rho(S-x) + 1-\rho(S+y) \right) \\
        &= 1 - \frac{1}{2} \rho(S-x) \left(1+\frac{d_G(r,x)}{d_G(r,x)+1} \cdot \frac{d_G(r,y)}{d_G(r,y)+1}\right) \\
        & \leq 1 - \rho (S-x)\frac{d_G(r,x)}{d_G(r,x)+1} \\
        &=1- \rho(S),
    \end{align*}
    where the second last line follows from the easy fact that~${1+\frac{a}{a+1}\frac{b}{b+1} \geq \frac{2a}{a+1}}$ for all integers~${a,b \geq 0}$ with~${\abs{a-b} \leq 1}$. 
    Note that~${\abs{d_G(r,x)-d_G(r,y)} \leq 1}$ since~${xy \in E(G)}$. 
    Thus, $w$ is also $r$-feasible.  
\end{proof}

\section{The multiple source approach}
\label{sec:second}

Throughout this section,~$G$ is a fixed finite connected graph with~$n$ vertices and~$v$ is a fixed vertex of~$G$. 

Starting with a weight distribution~$w$ on~$G$, our goal is to maximize the amount of tea at~$v$ via a sequence of edge-sharing moves. 
Let
\begin{equation} \label{eq:wstarreach}
    w^*(x) \coloneqq \sup \{ w'(x) \mid w' \textnormal{ is edge-reachable from } w \}. 
\end{equation}

As noted in the introduction, edge-sharing moves can approximate sharing moves.  Therefore,
\begin{equation} \label{eq:wstar}
    w^*(x) = \sup \{ w'(x) \mid w' \textnormal{ is reachable from } w \}. 
\end{equation}

For example, if~${w \coloneqq \mathds{1}_{\{r\}}}$, then~${w^*(x) = \frac{1}{d_G(r,x)+1}}$ by~\Cref{prop:main}. 
Note that if~$w'$ is edge-reachable from~$\mathds{1}_{\{r\}}$, then~${w'(x)}$ is a dyadic rational. 
Therefore, the supremum in~\Cref{eq:wstarreach} is not always attained. 
In contrast, we will show that the supremum in~\Cref{eq:wstar} is always attained for all finite graphs in~\Cref{sec:reachable}. 

For~${w \in \weights}$, for a sequence~$(x_1, \dots, x_t)$ of distinct vertices of ${V(G) \setminus \{v\}}$, and for~${a \in \mathbb{R}_{\geq 0}}$, we define
\[
        f_{(x_1, \ldots,x_t);w}(a) 
        \coloneqq \left(\prod_{i=1}^t\frac{d_G(x_i,v)}{d_G(x_i,v)+1} \right)a 
        + \sum_{i=1}^t \left(\prod_{j=i+1}^t \frac{d_G(x_j,v)}{d_G(x_j,v)+1} \right)\frac{w(x_i)}{d_G(x_i,v)+1}.
\]

It is easy to check that we have the following recursion, 
\begin{equation} \label{eq:recursion}
    f_{(x_1, \dots, x_t); w}(a) = f_{(x_1, \dots, x_{t-1}); w}(a) + \frac{w(x_t)-f_{(x_1, \dots, x_{t-1}); w}(a)}{d_G(x_t, v)+1},
\end{equation}
with initial condition~${f_{\emptyset;w}(a) = a}$. 
The idea of \eqref{eq:recursion} is that 
$f_{(x_1, \dots, x_t); w}(a)$ is the amount of tea at~$v$
starting with~$a$ units of tea at~$v$ and then performing a sequence of sharing moves on $V(P_1)$, $V(P_2)$, $\ldots$, $V(P_t)$, 
where~$P_i$ is a shortest path from~$v$ to~$x_i$ for each~${i \in [t]}$
under the assumption that all vertices of $P_i$ except $x_i$ have the same amount of tea and $w(x_i)$ is the amount of tea at $x_i$ 
right before performing the sharing move on $V(P_i)$.

Note that~${f_{(x); w}(a) > a}$ if~${w(x) > a}$, 
that~${f_{(x); w}(a) < a}$ if~${w(x) < a}$, 
and that~${f_{(x); w}(a) = a}$ if~${w(x) = a}$.

For~${w \in \weights}$, an enumeration~${\sigma = (x_1,x_2, \ldots, x_{n-1})}$ of~${V(G) \setminus \{v\}}$, and~${a \in \mathbb{R}_{\geq 0}}$, we define for all~${i \in [n-1]}$, 
\begin{align*}
    \phi_{i,w}(\sigma,  a) &\coloneqq f_{(x_i, \ldots, x_{n-1}); w}(a) \quad \textnormal{ and } \quad
    \phi_{n,w}(\sigma,  a) \coloneqq a. 
    \intertext{We further define }
    \phi_{\sigma; w}(a) &\coloneqq \max\{\phi_{i,w}(\sigma,  a) \mid i \in [n] \} \}, \ \text{ and }\\
    \phi_w(a) &\coloneqq \max\{\phi_{\sigma; w}(a) \mid \sigma \text{ is an enumeration of $V(G) \setminus \{v\}$} \}.
\end{align*}
We are going to show that $w^*(v)\le \phi_w(w(v))$. In other words, no sequence of sharing moves can send more tea to~$v$ than the sequence of sharing moves on the shortest paths from~$v$ to some vertices in~$V(G) \setminus \{v\}$ in the some order.

The following lemma can be proved inductively from the definitions, and the proof is omitted. 

\begin{lem}
    \label{lem-increasing} 
    Let~${w \in \weights}$. 
    \begin{enumerate}[label=(\roman*)]
        \item The function~${f_{(x); w}(a)}$ is a strictly increasing function of~$a$ for each~${x \in V(G)\setminus \{v\}}$.
        \item For each enumeration~${\sigma}$ of ${V(G) \setminus \{v\}}$ and each~${i \in [n]}$, the function~$\phi_{i,w}(\sigma, a)$ is a strictly increasing function of~$a$. 
        \qed
    \end{enumerate}
\end{lem}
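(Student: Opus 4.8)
The plan is to reduce both parts to a single observation about the coefficient of~$a$. For any sequence~${(x_1, \dots, x_t)}$ of distinct vertices in~${V(G) \setminus \{v\}}$, the map~${a \mapsto f_{(x_1, \dots, x_t); w}(a)}$ is affine in~$a$, and by the defining formula its coefficient of~$a$ is exactly~${\prod_{i=1}^t \frac{d_G(x_i,v)}{d_G(x_i,v)+1}}$. Since~$G$ is a finite connected graph and each~${x_i \neq v}$, every distance~${d_G(x_i,v)}$ is a finite integer with~${d_G(x_i,v) \geq 1}$, so each factor~${\frac{d_G(x_i,v)}{d_G(x_i,v)+1}}$ lies in~${[\frac{1}{2},1)}$ and is in particular strictly positive. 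Hence the whole product is strictly positive, and an affine function of~$a$ with positive slope is strictly increasing.

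First I would prove this by induction on~$t$ using the recursion~\eqref{eq:recursion}. The base case~${f_{\emptyset;w}(a) = a}$ has slope~${1 > 0}$. For the inductive step, rewriting~\eqref{eq:recursion} as
\[
    f_{(x_1, \dots, x_t); w}(a) = \frac{d_G(x_t,v)}{d_G(x_t,v)+1}\, f_{(x_1, \dots, x_{t-1}); w}(a) + \frac{w(x_t)}{d_G(x_t,v)+1}
\]
exhibits~$f_{(x_1, \dots, x_t); w}$ as a strictly positive multiple of~$f_{(x_1, \dots, x_{t-1}); w}$ plus a constant. By the inductive hypothesis~$f_{(x_1, \dots, x_{t-1}); w}$ is affine in~$a$ with positive slope, so the same holds for~$f_{(x_1, \dots, x_t); w}$, whose slope simply acquires the further positive factor~${\frac{d_G(x_t,v)}{d_G(x_t,v)+1}}$. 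This also settles part~(i) as the special case~${t=1}$, where~${f_{(x); w}(a) = \frac{d_G(x,v)}{d_G(x,v)+1}\, a + \frac{w(x)}{d_G(x,v)+1}}$ manifestly has positive slope.

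For part~(ii), the function~$\phi_{i,w}(\sigma, a)$ with~${i \leq n-1}$ is by definition~${f_{(x_i, \dots, x_{n-1}); w}(a)}$, a function of exactly the type just analysed, hence strictly increasing in~$a$; the remaining case~${\phi_{n,w}(\sigma, a) = a}$ is trivially strictly increasing. There is no real obstacle here, since the only thing one must verify is positivity of the slope, which is guaranteed by~${d_G(x_i,v) \geq 1}$ for~${x_i \neq v}$. I would present the argument through the recursion~\eqref{eq:recursion} rather than the closed product formula, as that packages both parts into a single clean induction.
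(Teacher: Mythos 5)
Your proof is correct and is exactly the inductive argument from the definitions that the paper has in mind (the paper omits the proof, noting only that it ``can be proved inductively from the definitions''). The key point, that the coefficient of~$a$ is the product $\prod_i \frac{d_G(x_i,v)}{d_G(x_i,v)+1}$, which is strictly positive since $d_G(x_i,v)\geq 1$ for $x_i\neq v$ in a connected graph, is verified correctly via the recursion~\eqref{eq:recursion}.
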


\begin{lem}
    \label{lem-order}
    For each~${w \in \weights}$ and each pair~${(x_1,x_2)}$ of distinct vertices in ${V(G) \setminus \{v\}}$, we have 
    ${f_{(x_1, x_2); w}(a) \leq f_{(x_2, x_1); w}(a)}$ if and only if~${w(x_1) \geq w(x_2)}$. 
\end{lem}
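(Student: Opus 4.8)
The plan is to prove both directions simultaneously by computing the difference $f_{(x_2,x_1);w}(a) - f_{(x_1,x_2);w}(a)$ in closed form and showing that it equals a strictly positive multiple of $w(x_1)-w(x_2)$. To keep the bookkeeping light, set $d_i := d_G(x_i,v)$ and abbreviate $\alpha_i := \frac{d_i}{d_i+1}$ and $\beta_i := \frac{1}{d_i+1}$ for $i \in \{1,2\}$, so that $\alpha_i + \beta_i = 1$. Expanding the definition of $f$ in the case $t=2$ (using the convention that the empty product equals~$1$), I would record
\[
    f_{(x_1,x_2);w}(a) = \alpha_1\alpha_2\, a + \alpha_2\beta_1\, w(x_1) + \beta_2\, w(x_2),
\]
and, by exchanging the roles of the indices $1$ and $2$,
\[
    f_{(x_2,x_1);w}(a) = \alpha_1\alpha_2\, a + \alpha_1\beta_2\, w(x_2) + \beta_1\, w(x_1).
\]

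The key observation is that the coefficient of $a$ is the \emph{symmetric} quantity $\alpha_1\alpha_2$ in both expressions, so the variable $a$ cancels entirely in the difference. Subtracting and using the identity $1-\alpha_i = \beta_i$, I would obtain
\[
    f_{(x_2,x_1);w}(a) - f_{(x_1,x_2);w}(a) = \beta_1(1-\alpha_2)\, w(x_1) - \beta_2(1-\alpha_1)\, w(x_2) = \beta_1\beta_2\bigl(w(x_1) - w(x_2)\bigr).
\]
Since $\beta_1\beta_2 = \frac{1}{(d_G(x_1,v)+1)(d_G(x_2,v)+1)} > 0$, this difference is nonnegative — equivalently, $f_{(x_1,x_2);w}(a) \le f_{(x_2,x_1);w}(a)$ — exactly when $w(x_1) \geq w(x_2)$, which is precisely the claimed equivalence.

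There is no genuine obstacle here: the statement collapses to a one-line algebraic identity, and the only thing requiring care is matching the product conventions in the definition of $f$, in particular the empty product appearing in the last summand. As a slightly more conceptual alternative, one can derive the same identity from the recursion~\eqref{eq:recursion}, viewing $f_{(x_1,x_2);w}(a)$ as the result of averaging toward $x_1$ and then toward $x_2$; the difference of the two orders is then the commutator of two affine ``averaging'' maps, which again collapses to $\beta_1\beta_2\bigl(w(x_1)-w(x_2)\bigr)$ because the two leading contractions $\alpha_1,\alpha_2$ commute.
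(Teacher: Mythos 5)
Your proof is correct and is essentially the same computation as the paper's: both expand $f_{(x_1,x_2);w}(a)$ and $f_{(x_2,x_1);w}(a)$ explicitly (the paper via the recursion, you via the closed form) and observe that their difference equals $\frac{w(x_1)-w(x_2)}{(d_G(x_1,v)+1)(d_G(x_2,v)+1)}$, from which the equivalence is immediate. No issues.
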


\begin{proof}
    By~\Cref{eq:recursion},
    \begin{align*}
        f_{(x_1,x_2); w}(a) 
        &= a + \frac{w(x_1)-a}{d_G(x_1,v)+1} + \frac{w(x_2) - (a + \frac{w(x_1) - a}{d_G(x_1,v)+1})}{d_G(x_2,v)+1}\\
        &= a + \frac{w(x_1)-a}{d_G(x_1,v)+1} + \frac{w(x_2)-a}{d_G(x_2,v)+1}-\frac{w(x_1)-a}{(d_G(x_1,v)+1)(d_G(x_2,v)+1)}.
    \intertext{Similarly, }
        f_{(x_2,x_1); w}(a)
        &= a + \frac{w(x_1)-a}{d_G(x_1,v)+1} + \frac{w(x_2)-a}{d_G(x_2,v)+1}-\frac{w(x_2)-a}{(d_G(x_1,v)+1)(d_G(x_2,v)+1)}. 
    \end{align*}
    Therefore, $f_{(x_1,x_2); w}(a) \leq f_{(x_2, x_1); w}(a) $ if and only if $w(x_1) \geq w(x_2)$. 
\end{proof}

\begin{lem}
    \label{lem-order2}
    Let~${w \in \weights}$, let~${a \in \mathbb{R}_{\geq 0}}$, and let~${\sigma = (x_1, x_2, \ldots, x_{n-1})}$ be an enumeration of ${V(G) \setminus \{v\}}$ 
    such that~${w(x_i) \leq w(x_{i+1})}$ for all~${i \in [n-2]}$. 
    Let~${\ell\in [n]}$. 
    If~${w(x_{i}) \leq a}$ for all~${i \in [\ell-1]}$ and~${w(x_j) \geq a}$ for all~${j \in [\ell, n-1]}$, 
    then~${\phi_w(a) = \phi_{\ell, w}(\sigma, a)}$.
\end{lem}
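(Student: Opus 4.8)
The plan is to recast $\phi_w(a)$ as a maximum over ordered tuples and then show that the single tuple $(x_\ell,\dots,x_{n-1})$ dominates all others. First I would observe that, as $\sigma'$ ranges over all enumerations of $V(G)\setminus\{v\}$ and $i$ over $[n]$, the suffix $(x'_i,\dots,x'_{n-1})$ ranges over every ordered tuple of distinct vertices of $V(G)\setminus\{v\}$ (the empty tuple included, via $i=n$). Since $\phi_{i,w}(\sigma',a)=f_{(x'_i,\dots,x'_{n-1});w}(a)$, this gives
\[
\phi_w(a)=\max\{f_{(y_1,\dots,y_k);w}(a)\mid (y_1,\dots,y_k)\text{ a tuple of distinct vertices in }V(G)\setminus\{v\}\}.
\]
As $\phi_{\ell,w}(\sigma,a)=f_{(x_\ell,\dots,x_{n-1});w}(a)$ is itself one of these values, it suffices to prove that $f_{(y_1,\dots,y_k);w}(a)\le f_{(x_\ell,\dots,x_{n-1});w}(a)$ for every such tuple.

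To prove this inequality I would transform an arbitrary tuple into $(x_\ell,\dots,x_{n-1})$ through steps that never decrease the value of $f$. Writing $f_{(z_1,\dots,z_m);w}=g_{z_m}\circ\cdots\circ g_{z_1}$, where $g_z(t)\coloneqq t+\frac{w(z)-t}{d_G(z,v)+1}$ is strictly increasing (this is \Cref{lem-increasing} for the single-vertex function $f_{(z);w}$), every composition of such maps is strictly increasing. The first step is to sort the tuple into nondecreasing $w$-order. For an adjacent out-of-order pair with $w(y_i)>w(y_{i+1})$, \Cref{lem-order} applied at the intermediate value $p=f_{(y_1,\dots,y_{i-1});w}(a)$ gives $f_{(y_i,y_{i+1});w}(p)\le f_{(y_{i+1},y_i);w}(p)$, and applying the increasing tail map $g_{y_k}\circ\cdots\circ g_{y_{i+2}}$ preserves this; hence the swap does not decrease $f$, and bubble sorting reduces us to the sorted case.

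For a sorted tuple I would exploit the running-value invariant: if $z_1,\dots,z_p$ all have weight $\ge a$ and are processed in nondecreasing order, then the value at $v$ after each prefix is nondecreasing, stays at least $a$, and after $z_i$ is at most $w(z_i)$ (an easy induction using $g_{z_i}(q)\in[q,w(z_i)]$ whenever $q\le w(z_i)$). Two moves then finish the reduction. Removing a leading vertex of weight $<a$ does not decrease $f$: its move pushes the value strictly below $a$, so dropping it and invoking monotonicity yields a larger value; iterating prunes the tuple down to vertices of weight $\ge a$. Conversely, inserting an absent vertex $x_j$ of weight $\ge a$ at its sorted position does not decrease $f$: by the invariant the running value just before $x_j$ is at most $w(x_j)$, so the inserted move is nondecreasing, and the increasing tail propagates the inequality. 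Inserting every missing vertex of weight $\ge a$ brings us to $(x_\ell,\dots,x_{n-1})$ up to vertices of weight exactly $a$.

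The point requiring care — and what I expect to be the main obstacle — is the boundary weight $a$, since a vertex of weight exactly $a$ may sit on either side of the threshold index $\ell$, so the target tuple $(x_\ell,\dots,x_{n-1})$ need not literally equal the set of all vertices of weight $\ge a$. Here I would note that such a vertex is inert: in any sorted tuple the weight-$a$ vertices are processed first, while the running value still equals $a$, and $g_z$ fixes $a$ when $w(z)=a$. Hence weight-$a$ vertices may be added or deleted freely without changing $f$, which reconciles $(x_\ell,\dots,x_{n-1})$ with ``all vertices of weight $\ge a$''. Combining the steps gives $f_{(y_1,\dots,y_k);w}(a)\le f_{(x_\ell,\dots,x_{n-1});w}(a)=\phi_{\ell,w}(\sigma,a)$ for every tuple, and since this value is attained we conclude $\phi_w(a)=\phi_{\ell,w}(\sigma,a)$.
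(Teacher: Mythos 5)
Your proof is correct, and although it uses the same two ingredients as the paper (\Cref{lem-increasing} and \Cref{lem-order}), the decomposition is genuinely different. The paper argues in two steps for the fixed sorted enumeration: first that the maximum of $\phi_{i,w}(\sigma,a)$ over $i$ is attained at $i=\ell$, and then that $\phi_{i,w}(\sigma,a)\geq\phi_{i,w}(\sigma',a)$ for every enumeration $\sigma'$ and every fixed $i$. You instead rewrite $\phi_w(a)$ as a maximum over all ordered tuples of distinct vertices and transform an arbitrary tuple into $(x_\ell,\dots,x_{n-1})$ by moves that never decrease $f$: bubble-sorting via \Cref{lem-order}, deleting leading vertices of weight below $a$, inserting missing vertices of weight at least $a$ via your running-value invariant, and treating weight-exactly-$a$ vertices as inert. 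Your route is in fact the more robust one: the paper's intermediate claim compares suffixes of the same length $n-i$ taken from different enumerations, and these may be different vertex sets; replacing a vertex by a heavier one that is much farther from $v$ can decrease $f$ (with $a=0$, a weight-$1$ vertex at distance $1$ gives $\tfrac12$ as a length-one suffix, while a weight-$1.1$ vertex at distance $100$ gives only $\tfrac{1.1}{101}$), so that claim fails for individual $i$ even though the inequality actually needed, $\max_i\phi_{i,w}(\sigma',a)\leq\phi_{\ell,w}(\sigma,a)$, survives. Your tuple-by-tuple reduction proves exactly this weaker bound and therefore closes the argument cleanly; the one step that deserves full detail in a final write-up is the insertion move, which your invariant $g_z(q)\in[q,w(z)]$ for $q\leq w(z)$, propagated along a sorted prefix, does handle.
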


\begin{proof}
    For each~${i \in [\ell-1]}$, we have that~${f_{(x_i);w}(a) \leq a}$, and hence
    \[
        \phi_{i, w}(\sigma, a) = \phi_{i+1,w} (\sigma,f_{(x_i);w}(a)) \leq \phi_{i+1, w}(\sigma, a)
    \]
    by \Cref{lem-increasing}. 
    Similarly, for~${i \in [\ell+1, n]}$, we have~${f_{(x_{i-1});w}(a) \geq a}$, and hence 
    \[ 
        {\phi_{i-1, w}(\sigma, a)= \phi_{i,w} (\sigma,f_{(x_{i-1});w}(a)) \geq \phi_{i, w}(\sigma, a)}.
    \] 
    Thus~${\phi_{\sigma;w}(a) = \phi_{\ell, w}(\sigma,  a)}$. 
    
    It follows from the definitions and \Cref{lem-increasing,lem-order} that for every enumeration~$\sigma'$ of~${V(G) \setminus \{v\}}$ and every~${i \in [n]}$, we have~${\phi_{i,w}(\sigma,a) \geq \phi_{i,w}(\sigma',a)}$. 
    Hence, ${\phi_w(a) = \phi_{\ell, w}(\sigma,  a)}$, as desired. 
\end{proof}

\begin{lem}\label{lem:key-proof2}
    Let~${w \in \weights}$ and let~$w'$ be the weight distribution on~$G$ obtained from~$w$ by sharing on an edge~$xy$. 
    Then~${\phi_{w'}(w'(v)) \leq \phi_{w}(w(v))}$. 
\end{lem}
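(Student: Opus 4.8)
The plan is to exploit the characterization of $\phi_w$ as a maximum over ordered subsets. Unwinding the definitions, $\phi_w(a)=\max_S f_{S;w}(a)$, where $S=(z_1,\dots,z_k)$ ranges over all sequences of distinct vertices of $V(G)\setminus\{v\}$ (an arbitrary such $S$ arises as a suffix $(x_i,\dots,x_{n-1})$ of some enumeration $\sigma$, with $S=\emptyset$ giving the value $a$). Writing $c_z \coloneqq \frac{1}{d_G(z,v)+1}\in(0,\tfrac12]$, the recursion \eqref{eq:recursion} reads $f_{(z_1,\dots,z_j);w}(a)=(1-c_{z_j})\,f_{(z_1,\dots,z_{j-1});w}(a)+c_{z_j}\,w(z_j)$, so $f_{S;w}(a)$ is a convex combination of $a$ and the weights $w(z_j)$, increasing in each $w(z_j)$, and by \Cref{lem-order,lem-order2} the maximizing $S$ lists exactly the vertices of weight $\ge a$ in non-decreasing order of weight. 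I would keep this ``pull in every vertex of weight $\ge a$, lightest first'' picture throughout, split into the cases $v\notin\{x,y\}$ and $v\in\{x,y\}$, and in each use that $xy\in E(G)$ forces $\abs{d_G(x,v)-d_G(y,v)}\le 1$.

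Suppose first $v\notin\{x,y\}$, so $w'(v)=w(v)\eqqcolon a$; relabelling, assume $w(x)\le w(y)$, so that $w'(x)=w'(y)=m\coloneqq\tfrac12(w(x)+w(y))$ with $w(x)\le m\le w(y)$, while all other weights agree with $w$. If $m<a$, then neither $x$ nor $y$ qualifies for the $w'$-optimal sequence $S'$, so $S'$ uses only vertices on which $w$ and $w'$ agree, whence $\phi_{w'}(a)=f_{S';w'}(a)=f_{S';w}(a)\le\phi_w(a)$. If $m\ge a$, both $x$ and $y$ appear in $S'$; having equal weight, I may take them adjacent with $x$ immediately before $y$, calling this ordering $\pi$. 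I then pass from $w'$ to $w$ in two steps: (1) keeping $\pi$ fixed, change the weights of $x,y$ from $(m,m)$ to $(m-\delta,m+\delta)$, where $\delta=\tfrac12(w(y)-w(x))\ge 0$; (2) reorder $\pi$ into $w$-ascending order. Step (2) cannot decrease $f_{\,\cdot\,;w}(a)$ by \Cref{lem-order}, and step (1) changes the value by $\delta(\gamma_y-\gamma_x)$, where $\gamma_x,\gamma_y$ are the coefficients of $x,y$ in $\pi$; with $x$ directly before $y$ one gets $\gamma_y-\gamma_x=\Pi\,(c_y-c_x+c_xc_y)$ for a positive factor $\Pi$, and $c_y-c_x+c_xc_y\ge 0$ is equivalent to $d_G(y,v)\le d_G(x,v)+1$, which holds. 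Chaining, $\phi_{w'}(a)=f_{\pi;w'}(a)\le f_{\pi;w}(a)\le\phi_w(a)$.

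For $v\in\{x,y\}$, say $y=v$, the shared edge forces $d_G(x,v)=1$, so pulling in $x$ from a value $t$ returns exactly $\tfrac12(t+w(x))$; moreover $w'(x)=w'(v)=m\coloneqq\tfrac12(w(x)+w(v))$ and all other weights are unchanged. If $w(x)\ge w(v)$ (so $m\ge a\coloneqq w(v)$), I use the strategy for $w$ that pulls in $x$ first, reaching exactly $m$, and then replays the $w'$-optimal sequence on the remaining vertices (on which $w$ and $w'$ agree); this yields the value $\phi_{w'}(m)=\phi_{w'}(w'(v))$, so $\phi_w(w(v))\ge\phi_{w'}(w'(v))$. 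If instead $w(x)<w(v)$ (so $m<a$), then $x$ does not qualify for $w$, and the $w'$-optimal process started from $m$ first absorbs the vertices of weight in $[m,a)$, leaving a value $b\le a$, and then processes the common set $T$ of vertices of weight $\ge a$; since $f_{T;\,\cdot\,}$ is increasing (\Cref{lem-increasing}) and $w,w'$ agree on $T$, we get $\phi_{w'}(w'(v))=f_{T;w}(b)\le f_{T;w}(a)=\phi_w(w(v))$.

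The main obstacle is the sub-case $v\notin\{x,y\}$ with $m\ge a$: here the two optimal orderings genuinely differ, since the heavy endpoint $y$ migrates to the back while the light endpoint $x$ moves to the front, so a term-by-term comparison is impossible and the ``reweight-then-reorder'' device is needed. Everything hinges on the inequality $c_y-c_x+c_xc_y\ge 0$, i.e.\ $d_G(y,v)\le d_G(x,v)+1$; this is precisely where the adjacency of $x$ and $y$ enters, and I would flag it as the crux of the argument.
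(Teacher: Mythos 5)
Your proposal is correct and follows essentially the same route as the paper: reduce via \Cref{lem-order2} to the $w'$-sorted optimal sequence, place the two tied endpoints adjacently, and reduce to a local two-term comparison whose crux is exactly the inequality $c_y-c_x+c_xc_y\ge 0$, i.e.\ $d_G(y,v)\le d_G(x,v)+1$ (the paper's $(d-d'+1)(w(v_{i+1})-w(v_i))\ge 0$). The only cosmetic differences are that you phrase the comparison as a weight perturbation in the affine expansion rather than a direct computation of $f_{(v_i,v_{i+1});\,\cdot\,}(a)$, and that you split the case $v\in\{x,y\}$ into two sub-cases where the paper handles it uniformly by placing $y$ at position $\ell$.
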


\begin{proof}
    Let~${\sigma=(v_1, \ldots, v_{n-1})}$ be an enumeration of~${V(G) \setminus \{v\}}$ such that~${w'(v_i) \leq w'(v_{i+1})}$ for all~${i \in [n-2]}$. 
    By \Cref{lem-order2}, ${\phi_{w'}(w'(v)) = \phi_{\ell, w'}(\sigma, w'(v))}$, where
    \[
        \ell \coloneq 1 + \abs{\{ i \in [n-1] \mid w'(v_i) < w'(v) \}}.
    \]

    First, consider the case~${v \notin \{x,y\}}$. 
    Then~${w(v) = w'(v)}$. 
    Since~${w'(x) = w'(y) = \frac{w(x)+w(y)}{2}}$, we may assume that~${x = v_i, y = v_{i+1}}$ and~${w(v_i) \leq w(v_{i+1})}$ for some~${i \in [n-2]}$. 
    We will show that~${\phi_{\ell, w'}(\sigma, w(v)) \leq \phi_{\ell, w}(\sigma, w(v))}$. 
    If~${i < \ell}$, then since~${w(v_j) = w'(v_j)}$ for all~${j \geq \ell}$, it follows that ${\phi_{\ell, w'}(\sigma, w(v)) = \phi_{\ell, w}(\sigma, w(v))}$. 
    So we may assume that~${\ell \leq i}$. 
    Let 
    \[
        a \coloneqq f_{(v_{\ell}, v_{\ell+1}, \ldots, v_{i-1}); w}(w(v)) = f_{(v_{\ell}, v_{\ell+1}, \ldots, v_{i-1}); w'}(w'(v)). 
    \]
    It suffices to show that~${f_{(v_i, v_{i+1}); w'}(a) \leq f_{(v_i, v_{i+1}); w}(a)}$. 
    Let~${d \coloneqq d_G(v, v_{i})+1}$ and let \linebreak ${d' \coloneqq d_G(v, v_{i+1})+1}$. 
    Then, since~${d \geq d'-1}$ and~${w(v_{i+1}) \geq w(v_i)}$, we have 
     \begin{align*}
        2&dd'(f_{(v_i, v_{i+1}); w}(a)-  f_{(v_i, v_{i+1}); w'}(a))\\
        &= 2d'w(v_i) + 2dw(v_{i+1}) -2w(v_i) - ((d+d')w(v_i) + (d+d')w(v_{i+1}) - (w(v_i)+w(v_{i+1}))) \\
        &= d'w(v_i)+dw(v_{i+1})-2w(v_i) - dw(v_i)-d'w(v_{i+1}) + w(v_i)+w(v_{i+1}) \\
        &= (d-d'+1)(w(v_{i+1})-w(v_i)) 
        \geq 0.
    \end{align*}
    
    So the only case left to consider is when~${x=v}$. 
    Since $w'(v) = w'(y)$ and $\ell$ is equal to the minimum index such that~${w'(v_\ell) \geq w'(v)}$, we   may assume~${y = v_{\ell}}$. 
    It follows from the definition that ${w'(v) = f_{(v_{\ell}); w'}(w'(v)) = f_{(v_{\ell}); w}(w(v)) }$. 
    Hence, 
    \[
        \phi_{w'}(w'(v)) 
        = \phi_{\ell, w'}(\sigma, w'(v)) 
        = \phi_{\ell, w}(\sigma, w(v)) 
        \leq \phi_w(w(v)). \qedhere
    \]
\end{proof}

The above lemmas imply the following.

\begin{proposition}
    \label{prop:main2} 
    For every weight distribution~$w$ on a finite connected graph~$G$ and every~${v \in V(G)}$, 
    \[
        {w^*(v) \leq \phi_{w}(w(v)) = f_{(x_1,x_2, \ldots, x_k); w}(w(v))},
    \]
    where~$\{x_1, x_2, \ldots, x_k\}=\{x \in V(G) \mid w(x) > w(v)\}$ and ${w(x_i) \leq w(x_{i+1})}$ for all~${i \in [k-1]}$. 
    \qed
\end{proposition}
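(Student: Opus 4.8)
The plan is to combine two facts established above: \Cref{lem:key-proof2}, which says that the quantity $\phi_w(w(v))$ cannot increase under a single edge-sharing move, together with \Cref{lem-order2}, which evaluates $\phi_w(w(v))$ in closed form once the vertices are sorted by weight. The inequality $w^*(v) \le \phi_w(w(v))$ will come from the first, and the identity $\phi_w(w(v)) = f_{(x_1, \ldots, x_k); w}(w(v))$ from the second.

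First I would record the trivial lower bound $u(v) \le \phi_u(u(v))$, valid for every weight distribution $u$: since $\phi_{n,u}(\sigma, a) = a$ for every enumeration $\sigma$, the maximum defining $\phi_u(a)$ is at least $a$, and in particular at least $u(v)$ when $a = u(v)$. Next, take any $w'$ that is edge-reachable from $w$, witnessed by a sequence $w = w_0, w_1, \ldots, w_t = w'$ in which each $w_i$ arises from $w_{i-1}$ by a single edge-sharing move. Applying \Cref{lem:key-proof2} at each step and chaining the resulting inequalities gives
\[
    w'(v) \le \phi_{w'}(w'(v)) = \phi_{w_t}(w_t(v)) \le \phi_{w_{t-1}}(w_{t-1}(v)) \le \cdots \le \phi_{w_0}(w_0(v)) = \phi_w(w(v)).
\]
As this holds for every edge-reachable $w'$, taking the supremum over all such $w'$ in the definition~\eqref{eq:wstarreach} of $w^*(v)$ yields $w^*(v) \le \phi_w(w(v))$.

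It then remains to identify $\phi_w(w(v))$ with the displayed value of $f$. Here I would apply \Cref{lem-order2} with $a = w(v)$ to an enumeration $\sigma = (y_1, \ldots, y_{n-1})$ of $V(G) \setminus \{v\}$ satisfying $w(y_1) \le \cdots \le w(y_{n-1})$, choosing the split index $\ell \coloneqq n - k$, where $k = \abs{\{x \in V(G) \mid w(x) > w(v)\}}$. With this choice the vertices $y_\ell, \ldots, y_{n-1}$ are exactly those with $w(x) > w(v)$, while $y_1, \ldots, y_{\ell-1}$ are those with $w(x) \le w(v)$, so the hypotheses of \Cref{lem-order2} are met and $\phi_w(w(v)) = \phi_{\ell, w}(\sigma, w(v)) = f_{(y_\ell, \ldots, y_{n-1}); w}(w(v))$; relabelling $(y_\ell, \ldots, y_{n-1})$ as $(x_1, \ldots, x_k)$ gives the claimed formula. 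The only point requiring care is the treatment of vertices with $w(x) = w(v)$: they should be placed in the lower block $[\ell-1]$, which is legitimate because \Cref{lem-order2} only requires $w(y_i) \le a$ there, so that they do not enter the final product -- consistent with the fact that such a vertex contributes nothing, since $f_{(x); w}(w(v)) = w(v)$. I expect this bookkeeping, rather than any genuine analytic difficulty, to be the only delicate part of the argument, the substance of the proposition being already carried by \Cref{lem:key-proof2,lem-order2}.
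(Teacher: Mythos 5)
Your proposal is correct and follows exactly the route the paper intends: the paper omits the proof with the remark ``The above lemmas imply the following,'' and the implication is precisely your chain $w'(v)\le\phi_{w'}(w'(v))\le\cdots\le\phi_w(w(v))$ via \Cref{lem:key-proof2}, followed by the closed-form evaluation via \Cref{lem-order2} with the split index $\ell$ placed so that the equal-weight vertices fall in the lower block. Nothing further is needed.
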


\begin{cor}
    \label{cor-singlesourcesink}
    Let~$G$ be a finite connected graph,~${r \in V(G)}$, and ${w \coloneq \mathds{1}_{\{r\}}}$. 
    Then ${w^*(v) = \frac{1}{d_G(r, v)+1}}$ for all~${v \in V(G)}$.  
\end{cor}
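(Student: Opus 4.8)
The plan is to read off the upper bound directly from Proposition~\ref{prop:main2} and then match it with an explicit construction. Set $w \coloneqq \mathds{1}_{\{r\}}$, so that $w(r) = 1$ and $w(x) = 0$ for every $x \ne r$. I would split the verification of the upper bound into the cases $v = r$ and $v \ne r$, since these affect which vertices exceed $w(v)$.

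For $v = r$ we have $w(v) = 1$, which is the maximum value attained by $w$, so the set $\{x \in V(G) \mid w(x) > w(v)\}$ in Proposition~\ref{prop:main2} is empty; hence $k = 0$ and $\phi_w(w(v)) = f_{\emptyset;w}(1) = 1$ by the initial condition $f_{\emptyset;w}(a) = a$, giving $w^*(r) \le 1 = \tfrac{1}{d_G(r,r)+1}$. For $v \ne r$ we have $d_G(r,v) \ge 1$ and $w(v) = 0$, so $\{x \mid w(x) > w(v)\} = \{r\}$, i.e.\ $k = 1$ and $x_1 = r$. Evaluating the defining formula for $f$ (equivalently, the recursion~\eqref{eq:recursion}) at $a = 0$ gives
\[
    \phi_w(w(v)) = f_{(r);w}(0) = \frac{d_G(r,v)}{d_G(r,v)+1}\cdot 0 + \frac{w(r)}{d_G(r,v)+1} = \frac{1}{d_G(r,v)+1},
\]
so Proposition~\ref{prop:main2} yields $w^*(v) \le \tfrac{1}{d_G(r,v)+1}$ in every case.

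For the matching lower bound I would exhibit a single reachable distribution attaining this value. Let $P$ be a shortest $r$--$v$ path; it has exactly $d_G(r,v)+1$ vertices and total weight $w(V(P)) = 1$. Since $V(P)$ is a finite connected set, a single sharing move on $V(P)$ produces a distribution $w'$ reachable from $w$ with $w'(v) = \tfrac{1}{d_G(r,v)+1}$ (this also covers $v = r$, where $P$ is the one-vertex path and $w'(r) = 1$). By~\eqref{eq:wstar}, the supremum defining $w^*$ is the same whether taken over reachable or edge-reachable distributions, so $w^*(v) \ge w'(v) = \tfrac{1}{d_G(r,v)+1}$. Combining this with the upper bound gives the claimed equality.

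I do not anticipate any serious obstacle: the corollary is essentially a specialization of Proposition~\ref{prop:main2} to $w = \mathds{1}_{\{r\}}$. The only points requiring care are the bookkeeping in the two cases $v = r$ and $v \ne r$, the correct evaluation of $f_{(r);w}(0)$, and the observation that the upper bound is realized exactly by one sharing move along a geodesic, which relies on~\eqref{eq:wstar} to pass between the reachable and edge-reachable suprema.
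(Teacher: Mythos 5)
Your proposal is correct and follows essentially the same route as the paper: the upper bound is read off from Proposition~\ref{prop:main2} via the evaluation $f_{(r);w}(0)=\frac{1}{d_G(r,v)+1}$, and the lower bound comes from a single sharing move on the vertex set of a shortest $r$--$v$ path together with~\eqref{eq:wstar}. The only difference is that you treat the case $v=r$ explicitly (where the relevant vertex sequence is empty), which the paper's one-line proof glosses over; this is a harmless and slightly more careful piece of bookkeeping.
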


\begin{proof}
    Let~${d \coloneqq d_G(r, v)}$. 
    Let~$T$ be the vertex set of a shortest path from~$r$ to~$v$, and let~$w'$ be obtained from~$w$ by sharing on~$T$. 
    Then~${w'(v) = \frac{1}{d+1}}$. 
    By~\Cref{eq:wstar},~${w^*(v) \geq \frac{1}{d+1}}$. On the other hand, by \Cref{prop:main2},  
    ${w^*(v) \leq f_{(r); w}(w(v)) = \frac{1}{d+1}}$. 
\end{proof}

Even though~\Cref{cor-singlesourcesink} is only stated for finite graphs, it can be used to prove the same result for infinite graphs.  

\begin{proof}[Proof of~\Cref{thm:main} from~\Cref{cor-singlesourcesink}]
    Let $G$ be an infinite connected graph, $r \in V(G)$, and $w=\mathds{1}_{\{r\}}$.  For each subgraph $H$ of $G$, let 
    \[
        w^*(v,H) = \sup \{ w'(v) \mid w' \textnormal{ is edge-reachable from $w$ using edges from $H$}\}. 
    \]
    Let~$\mathcal{H}$ be the collection of finite connected subgraphs of~$G$ containing~$r$ and~$v$. 
    By definition, if~$w'$ is edge-reachable from~$w$, then there is a \emph{finite} sequence of edge-sharing moves which changes~$w$ into~$w'$. 
    Therefore,
    \[
        w^*(v) = \sup \{w^*(v,H) \mid H \in \mathcal{H} \} \leq \sup \left\{ \frac{1}{d_H(r,v)} \,\middle|\, H \in \mathcal{H} \right\}=\frac{1}{d_G(r,v)}. \qedhere
    \]
\end{proof}

The following result demonstrates another situation in which the bound in \Cref{prop:main2} is tight. 
We write $K_{1,t}$ for the graph on $t+1$ vertices, consisting of a vertex $v$ of degree $t$ and $t$ other vertices of degree $1$, all adjacent to~$v$.
\begin{cor}
    \label{cor-finitestars}
    Let $G=K_{1,t}$ for a positive integer $t$ and let $v$ be a vertex of degree~$t$ in~$G$.
    For every~${w \in \weights}$ on~$G$, 
    \[
        {w^*(v) = \phi_{w}(w(v)) = f_{(x_1,x_2, \ldots, x_k); w}(w(v))},
    \]
    where~$\{x_1, x_2, \ldots, x_k\}=\{x \in V(G) \mid w(x) > w(v)\}$ and ${w(x_i) \leq w(x_{i+1})}$ for all~${i \in [k-1]}$.  
\end{cor}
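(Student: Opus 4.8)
The plan is to notice that half of the statement is already in hand: by \Cref{prop:main2}, applied to the finite connected graph $K_{1,t}$, we have $w^*(v) \le \phi_w(w(v)) = f_{(x_1,\ldots,x_k);w}(w(v))$ with $\{x_1,\ldots,x_k\}=\{x \mid w(x)>w(v)\}$ ordered increasingly. So the only thing left is the reverse inequality $w^*(v) \ge f_{(x_1,\ldots,x_k);w}(w(v))$, and I would prove it by exhibiting an explicit finite sequence of edge-sharing moves whose net effect at $v$ equals $f_{(x_1,\ldots,x_k);w}(w(v))$ exactly.

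The structural point that makes this work is that every leaf $x$ of $K_{1,t}$ is at distance $1$ from the center $v$, so the unique shortest path from $v$ to $x$ is the single edge $vx$ and $d_G(x,v)+1=2$. Hence the idealized operation modelled by the recursion \eqref{eq:recursion} — sharing on the vertex set of a shortest path from $v$ to $x_i$ — coincides here literally with the edge-sharing move on $\{v,x_i\}$, with no approximation needed. Concretely, I would order the heavy leaves so that $w(x_1)\le \cdots \le w(x_k)$ and perform edge-sharing moves on $\{v,x_1\},\{v,x_2\},\ldots,\{v,x_k\}$ in this order. Because distinct leaves meet only at $v$, no leaf $x_i$ is touched before its own move, so the amount of tea at $x_i$ just before sharing on $\{v,x_i\}$ is still $w(x_i)$. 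Writing $a_0\coloneqq w(v)$ and letting $a_i$ denote the amount at $v$ after the $i$-th move, the move produces $a_i = \tfrac12\bigl(a_{i-1}+w(x_i)\bigr)$, which is exactly \eqref{eq:recursion} specialized to $d_G(x_i,v)+1=2$. By induction $a_k = f_{(x_1,\ldots,x_k);w}(w(v))$, and the resulting distribution $w'$ is edge-reachable from $w$ with $w'(v)=f_{(x_1,\ldots,x_k);w}(w(v))$, so $w^*(v)\ge f_{(x_1,\ldots,x_k);w}(w(v))$. Combined with \Cref{prop:main2} this gives the asserted equality, and in fact shows the supremum defining $w^*(v)$ is attained.

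I do not expect a genuine obstacle: the content of the corollary is precisely that on a star the general upper bound of \Cref{prop:main2} is met by the most naive strategy of averaging $v$ against each heavier leaf once. The only bookkeeping is to confirm that restricting to the leaves with $w(x)>w(v)$, in increasing order, reproduces the optimizer of \Cref{prop:main2}; this is immediate from \Cref{lem-order2}, since a leaf of weight at most $w(v)$ would leave the tea at $v$ unchanged or decrease it, and hence can be omitted without loss.
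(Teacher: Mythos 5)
Your proposal is correct and matches the paper's proof exactly: the paper also obtains the upper bound from \Cref{prop:main2} and realizes the lower bound by performing the edge-sharing moves $(\{v,x_1\},\ldots,\{v,x_k\})$ in increasing order of weight. You have merely spelled out the bookkeeping (each leaf is untouched before its own move, so the recursion \eqref{eq:recursion} with $d_G(x_i,v)+1=2$ is realized exactly) that the paper leaves implicit.
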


\begin{proof}
    Consider the sequence ${(\{v,x_{1}\}, \{v,x_{2}\}, \dots, \{v,x_{k}\})}$ of edge-sharing moves. 
\end{proof}

\section{The space of reachable weight distributions}
\label{sec:reachable}

Let~$G$ be a graph, let~${w \in \weights}$, and let~${v \in V(G)}$. In this section we study the set of weight distributions reachable from $w$, which we denote by $\mathcal{R}(w)$. The main result of this section is that $\mathcal{R}(w)$ is a compact 
subset of $\weights$ for all finite graphs $G$. At the end of this section we show that this is not necessarily true for infinite graphs.  

We may interpret each element of $\mathcal{R}(w)$ as a  linear operator defined by a doubly stochastic $V(G) \times V(G)$ matrix. To be precise,
given a sharing move~$T \subseteq V(G)$, we define the matrix~$A_T$ where~${A_T(x,y) = A_T(y,x) = \frac{1}{\abs{T}}}$ if~${x,y \in T}$, ${A_T(x,y) = A_T(y,x) = 0}$ if~${x \neq y}$ and~${\{x,y\} \not\subseteq T}$, and~${A_T(x,x) = 1}$ if~${x \notin T}$, as illustrated in \Cref{fig:matrix}.  
Then, applying the sharing move on~$T$ to a weight distribution~$w$ yields the weight distribution $A_Tw$.

\begin{figure}[htbp]
    \centering
    \[
    A_T = 
    \begin{pmatrix}
        \frac{1}{\abs{T}} & \dots & \frac{1}{\abs{T}} & 0 & \dots & 0\\
        \vdots & \ddots & \vdots & \vdots & \ddots & \vdots\\
        \frac{1}{\abs{T}} & \dots & \frac{1}{\abs{T}} & 0 & \dots & 0\\
        0 & \dots & 0 & 1 & \dots & 0\\
        \vdots & \ddots & \vdots & \vdots & \ddots & \vdots \\
        0 & \dots & 0 & 0 & \dots & 1
    \end{pmatrix}
    \]
    \caption{The matrix~$A_T$ that represents the sharing move on~$T$. }
    \label{fig:matrix}
\end{figure}

Hence, a sequence of sharing moves~$(T_1, \dots, T_\ell)$ corresponds to the product of the matrices~${A_{T_\ell}  \cdots  A_{T_1}}$. Since the product of doubly stochastic matrices is itself doubly stochastic, we obtain the following. 

\begin{lemma}
    \label{lem:matrices}
     For every finite graph~$G$, there is a countable set~$\mathcal{R}$ of doubly stochastic $V(G) \times V(G)$ matrices such that for every~$w \in \mathbb{R}_{\geq 0}^{V(G)}$, we have~$\mathcal{R}(w) = \{ Aw \mid A \in \mathcal{R} \}$. \qed
\end{lemma}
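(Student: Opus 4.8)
The plan is to let $\mathcal{R}$ be the set of all finite products of the matrices $A_T$ already introduced. Since $G$ is finite, there are only finitely many nonempty connected subsets $T \subseteq V(G)$, and hence only finitely many matrices of the form $A_T$; write $\mathcal{A}$ for this finite set. First I would define $\mathcal{R}$ to consist of the identity matrix together with every product $A_{T_\ell} \cdots A_{T_1}$ with $\ell \geq 1$ and each factor in $\mathcal{A}$. For each fixed length $\ell$ there are at most $\abs{\mathcal{A}}^\ell$ such products, so $\mathcal{R}$ is a countable union of finite sets and is therefore countable. Each element of $\mathcal{R}$ is doubly stochastic, since the identity is doubly stochastic and, as already noted, a product of doubly stochastic matrices is doubly stochastic.

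It then remains to verify the identity $\mathcal{R}(w) = \{ Aw \mid A \in \mathcal{R} \}$ for every $w \in \weights$. By the definition of reachability, $w'$ is reachable from $w$ exactly when there is a finite sequence $(T_1, \dots, T_\ell)$ of sharing moves transforming $w$ into $w'$; performing these moves in order gives ${w' = A_{T_\ell} \cdots A_{T_1} w}$, with the empty sequence corresponding to $w' = Iw = w$. This says precisely that $w' = Aw$ for some $A \in \mathcal{R}$, and conversely every $Aw$ with $A \in \mathcal{R}$ arises from such a sequence of moves, giving the claimed equality.

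I do not expect any genuine obstacle here, as the statement is essentially a bookkeeping consequence of the matrix encoding of sharing moves developed just above. The one point deserving a moment's care is that the \emph{same} set $\mathcal{R}$ must serve for all weight distributions simultaneously; this holds because the matrices $A_T$, and hence $\mathcal{R}$, are determined by the combinatorics of $G$ alone and do not reference $w$.
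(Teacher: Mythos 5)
Your proposal is correct and matches the paper's (implicit) argument exactly: the paper states the lemma with no separate proof because the preceding discussion already identifies $\mathcal{R}$ as the set of finite products $A_{T_\ell}\cdots A_{T_1}$, notes that products of doubly stochastic matrices are doubly stochastic, and leaves countability and the independence from $w$ as immediate. Your additional care about the empty product and the uniformity of $\mathcal{R}$ over all $w$ is sound (and the identity is in any case realized by a singleton sharing move).
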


Let~$\norm{w}$ denote the Euclidean norm of~$w$.  

\begin{observation}
    \label{obs:normdecreasing}
    For all~${A \in \mathcal{R}}$ and~${w \in \mathbb{R}^{V(G)}}$, ${\norm{Aw} \leq \norm{w}}$,  with equality if and only if $Aw=w$.
\end{observation}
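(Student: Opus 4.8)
The plan is to reduce the statement to the case of a single sharing matrix~$A_T$ and then telescope over a product. First I would record an exact identity for one sharing move. Fix a sharing move~${T \subseteq V(G)}$ and write~${m \coloneqq \frac{1}{\abs{T}}\sum_{x\in T} w(x)}$ for the average of~$w$ over~$T$. Since~$A_T$ leaves every coordinate outside~$T$ unchanged and replaces every coordinate in~$T$ by~$m$, a direct coordinate-level computation gives
\[
    \norm{w}^2 - \norm{A_Tw}^2 = \sum_{x\in T}\bigl(w(x)^2 - m^2\bigr) = \sum_{x\in T}\bigl(w(x)-m\bigr)^2 \geq 0,
\]
where the middle equality uses~${\sum_{x\in T} w(x) = \abs{T}\,m}$, hence~${\abs{T}\,m^2 = m\sum_{x\in T}w(x)}$. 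This yields~${\norm{A_Tw}\leq \norm{w}}$, and equality holds precisely when~${w(x)=m}$ for every~${x\in T}$, that is, when~$w$ is already constant on~$T$, which is exactly the condition~${A_Tw = w}$.

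Next I would lift this to an arbitrary~${A \in \mathcal{R}}$. By construction~${A = A_{T_\ell}\cdots A_{T_1}}$ for some sharing moves~${T_1,\dots,T_\ell}$. Setting~${w_0 \coloneqq w}$ and~${w_i \coloneqq A_{T_i}w_{i-1}}$ for~${i \in [\ell]}$, the single-move bound applied repeatedly yields the chain~${\norm{w_0}\geq \norm{w_1}\geq\cdots\geq\norm{w_\ell} = \norm{Aw}}$, which already gives~${\norm{Aw}\leq\norm{w}}$.

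For the equality case, suppose~${\norm{Aw} = \norm{w}}$, so~${\norm{w_\ell} = \norm{w_0}}$. Then every inequality in the chain must be an equality, so~${\norm{w_i} = \norm{w_{i-1}}}$ for each~${i \in [\ell]}$. By the equality clause of the single-move bound, this forces~${A_{T_i}w_{i-1} = w_{i-1}}$, i.e.~${w_i = w_{i-1}}$, for every~$i$; composing these identities gives~${Aw = w_\ell = w_0 = w}$. The converse implication is immediate, since~${Aw=w}$ trivially gives~${\norm{Aw}=\norm{w}}$.

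There is no serious obstacle here; the only point requiring care is the equality analysis of the product, where one must observe that norm-preservation at the final step propagates backwards to norm-preservation at \emph{each} intermediate step, and then invoke the sharp equality condition for a single move at each step. Conceptually, each~$A_T$ is the orthogonal projection onto the subspace of vectors that are constant on~$T$, so~$A$ is a product of orthogonal projections and is therefore a contraction; the displayed identity is simply the elementary, coordinate-level form of this fact, and I would present it that way to keep the argument self-contained.
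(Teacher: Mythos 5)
Your proof is correct and follows essentially the same approach as the paper: the key point in both is that a single sharing move on~$T$ decreases the squared norm by exactly the variance of~$w$ over~$T$ (the paper phrases this via Jensen's inequality, you via the equivalent sum-of-squares identity), with equality precisely when~$w$ is constant on~$T$. The paper's proof stops at the single-move case and leaves the telescoping over a product~$A_{T_\ell}\cdots A_{T_1}$ and the backward propagation of equality implicit, whereas you spell these out; that extra detail is welcome but not a difference in method.
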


\begin{proof}
Let $w \in \mathbb{R}_{\geq 0}$ and $T$ be a finite connected subset of $V(G)$.  Let $a:=\frac{\sum_{t \in T} w(t)}{|T|}$.  By Jensen's inequality, $\sum_{t \in T} w(t)^2 \geq |T|a^2$, with equality if and only if $w(t)=a$ for all $t \in T$.  
\end{proof}

We now show that every infinite sequence of sharing moves has a well-defined limit.  We will use the following theorem of Amemiya and Ando~\cite{AA1965}, which holds in the more general setting of arbitrary Hilbert spaces and operators which satisfy the conclusion of~\Cref{obs:normdecreasing}. We state their theorem only in our restricted setting.

\begin{lemma}[Amemiya and Ando~\cite{AA1965}; see Dye~\cite{Dye1989}]\label{lem:aa}
    Let $G$ be a finite graph and let $T_1$, $T_2$, $\ldots$ be an infinite sequence of sharing moves of~$G$ such that every~$T_i$ appears infinitely many times in the sequence. For each $i \in \mathbb{N}$, let $S_i:=A_{T_i}A_{T_{i-1}}\cdots A_{T_{1}}$, viewed as a linear operator $\mathbb{R}^{V(G)} \to \mathbb{R}^{V(G)}$. Then $\lim_{i \to \infty}S_i$ is the projection~$\pi$ onto the subspace $\bigcap_{k=1}^\infty \{x\in \mathbb{R}^{V(G)}: A_{T_{k}} x= x\}$.
\end{lemma}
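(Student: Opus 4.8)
The plan is to exploit the fact that each sharing matrix $A_T$ is an \emph{orthogonal} projection: it is symmetric and idempotent, and its fixed space $\{x : A_T x = x\}$ is exactly the subspace of vectors that are constant on $T$. Since $G$ is finite there are only finitely many distinct matrices $Q_1, \dots, Q_m$ among the $A_{T_i}$, and the target subspace is their common fixed space $M := \bigcap_{j} \{x : Q_j x = x\}$, with $\pi$ the orthogonal projection onto $M$. Because each $Q_j$ fixes $M$ pointwise and is self-adjoint, both $M$ and $M^\perp$ are invariant under every $Q_j$, hence under every $S_i$. As each $S_i$ acts as the identity on $M$, and $\pi$ is the identity on $M$ and zero on $M^\perp$, it suffices to prove that $S_i y \to 0$ for every $y \in M^\perp$; in finite dimensions this pointwise statement is equivalent to entrywise, hence operator-norm, convergence $S_i \to \pi$.

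First I would record the basic monotonicity supplied by \Cref{obs:normdecreasing}: $\norm{S_i y}$ is non-increasing, so it converges to some $L \geq 0$. Since $S_i = A_{T_i} S_{i-1}$ with $A_{T_i}$ an orthogonal projection, the Pythagorean identity gives $\norm{S_{i-1}y}^2 - \norm{S_i y}^2 = \norm{(I - A_{T_i})S_{i-1}y}^2$, which telescopes to the summability $\sum_i \norm{(I - A_{T_i}) S_{i-1} y}^2 < \infty$, so $\norm{(I - A_{T_i})S_{i-1}y} \to 0$. Using compactness of the unit ball, any subsequential limit $z$ of $(S_i y)$ lies in $M^\perp$ and satisfies $\norm{z} = L$, and the goal becomes to show $L = 0$.

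The main obstacle is precisely the step that makes this the nontrivial theorem of Amemiya and Ando rather than a routine exercise. Because each $Q_j$ appears infinitely often, one can extract, for each fixed $j$, a tail-limit $w_j$ with $Q_j w_j = w_j$ and $\norm{w_j} = L$; but the gaps between successive occurrences of a given $Q_j$ are \emph{unbounded}, so these per-projection limits $w_j$ need not coincide, and one cannot naively conclude that a single limit point is fixed by all $Q_j$ simultaneously (which would force it into $M \cap M^\perp = \{0\}$ and give $L=0$). Controlling these unbounded gaps is the crux, and it is exactly what the Amemiya--Ando argument, in the formulation of Dye~\cite{Dye1989}, supplies, establishing weak convergence of the products to $\pi$. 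I would therefore invoke their result for this step; in our finite-dimensional setting weak convergence immediately upgrades to the stated convergence $\lim_i S_i = \pi$, since the closed unit ball is compact and all norms on $\mathbb{R}^{V(G)}$ are equivalent.
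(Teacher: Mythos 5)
The paper does not actually prove this lemma --- it is quoted verbatim (restricted to the finite-dimensional setting) from Amemiya--Ando via Dye --- and your argument likewise defers the one genuinely hard step, namely controlling the unbounded gaps between successive occurrences of a given projection, to that same citation, so the two approaches coincide. Your surrounding reductions are all correct: each $A_T$ is an orthogonal projection, $M$ and $M^\perp$ are invariant under every factor, the Pythagorean telescoping identity is valid, and weak convergence upgrades to the stated convergence because the space is finite-dimensional.
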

\begin{lem}\label{converge}
    Let~$G$ be a finite graph, and~${(w_i)_{i \in \mathbb{N}}}$ be a sequence of weight distributions on~$G$ such that~$w_i$ is reachable from~$w_{i-1}$ for all integers~${i>1}$. 
    Then~${w \coloneqq \lim_{i \to \infty} w_i}$ exists. 
    Moreover, $w$ is reachable from~$w_1$. 
\end{lem}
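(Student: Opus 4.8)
The plan is to concatenate all the sharing moves witnessing the reachabilities ${w_{i-1} \to w_i}$ into a single infinite sequence of moves applied to~$w_1$, and then to read off each~$w_i$ as a partial product applied to~$w_1$. Concretely, for each~${i>1}$ I fix a finite sequence of sharing moves taking~$w_{i-1}$ to~$w_i$, and let~${T_1, T_2, \dots}$ be their concatenation, with~${S_k \coloneqq A_{T_k}\cdots A_{T_1}}$. Then~${w_i = S_{k_i} w_1}$ for suitable nondecreasing indices~${k_1 \le k_2 \le \cdots}$, so~$(w_i)$ is a subsequence of~${(S_k w_1)_k}$. If only finitely many of the chosen moves are nontrivial, then~$(w_i)$ is eventually constant, so the limit exists and is reachable from~$w_1$ by the (finite) concatenation; hence I may assume the sequence~$(T_k)$ is infinite.

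First I would establish existence of the limit. Since~$V(G)$ is finite there are only finitely many sharing moves, so there is an~$N$ beyond which every~$T_k$ lies in the set~$\mathcal{T}_\infty$ of moves occurring infinitely often, and each move of~$\mathcal{T}_\infty$ still occurs infinitely often in the tail~${(T_k)_{k>N}}$. Applying \Cref{lem:aa} to this tail (relabelled as a fresh sequence), the operators~${A_{T_k}\cdots A_{T_{N+1}}}$ converge to the orthogonal projection~$\pi$ onto~${F \coloneqq \bigcap_{T \in \mathcal{T}_\infty}\{x : A_T x = x\}}$. Writing~${u \coloneqq S_N w_1}$, we get~${S_k w_1 = (A_{T_k}\cdots A_{T_{N+1}})\, u \to \pi u}$, so the whole sequence~$(S_k w_1)$ converges; in particular its subsequence~$(w_i)$ converges to~${w \coloneqq \pi u}$, and~${w \in \weights}$ since each~${w_i \geq 0}$.

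It remains to show that~$w$ is reachable from~$w_1$ by a \emph{finite} sequence. Since~${A_T x = x}$ holds exactly when~$x$ is constant on~$T$, the space~$F$ consists of the vectors constant on each~${T \in \mathcal{T}_\infty}$, equivalently constant on each block of the partition~$\mathcal{P}$ of~$V(G)$ obtained by merging vertices lying in a common member of~$\mathcal{T}_\infty$ and taking the transitive closure. The orthogonal projection onto~$F$ is therefore block-averaging: $\pi u$ assigns to each vertex~$x$ the value~${u(B)/\abs{B}}$, where~$B$ is the block of~$\mathcal{P}$ containing~$x$. The key observation is that every block~$B$ induces a connected subgraph of~$G$: any two members of~$\mathcal{T}_\infty$ inside~$B$ are joined by a chain of members with consecutive nonempty intersections, and since each member is connected and overlapping connected subgraphs have connected union, $B$ is connected.

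Consequently~${w = \pi u}$ is obtained from~$u$ by performing a single sharing move on each non-singleton block of~$\mathcal{P}$; these blocks are pairwise disjoint and connected, so this is a valid finite sequence of sharing moves. Since~${u = S_N w_1}$ is reachable from~$w_1$ by the finite prefix~${T_1, \dots, T_N}$, it follows that~$w$ is reachable from~$w_1$, as required. I expect the main obstacle to be this reachability claim rather than the existence of the limit: the work lies in identifying the Amemiya--Ando limit concretely as averaging over a partition into $G$-connected blocks, which is precisely what lets the infinite limiting process be realized by finitely many sharing moves.
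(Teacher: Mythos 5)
Your proposal is correct and follows essentially the same route as the paper: reduce to a tail in which every move occurs infinitely often, apply the Amemiya--Ando lemma to identify the limit as a projection, and then realize that projection by finitely many sharing moves on the connected components of the union of the infinitely-occurring moves (your partition~$\mathcal{P}$ is exactly the paper's hypergraph-component decomposition). You are somewhat more explicit than the paper in verifying that the projection is block-averaging and that each block induces a connected subgraph of~$G$, but the argument is the same.
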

Note that we allow $w_i=w_{i-1}$, since we can take a singleton set as a sharing move.

\begin{proof}[Proof of \Cref{converge}]
    Without loss of generality, we may assume that for each~${i \in \mathbb{N}}$, there is a single sharing move on~$T_{i}$ which takes~$w_i$ to~$w_{i+1}$.  Let $\mathcal{I}$ be the family of subsets of $V(G)$ which occur infinitely often in the sequence $(T_{i})_{i \in \mathbb{N}}$.    Since there are only finitely many subsets of $V(G)$ and each $X \notin \mathcal{I}$ occurs only finitely often in $(T_{i})_{i \in \mathbb{N}}$, there exists $N \in \mathbb{N}$ such that $T_i \in \mathcal{I}$  for all $i \geq N$. 
    For each $i \in \mathbb{N}$, let $S_i:=A_{T_{N+i}}A_{T_{N+i-1}}\cdots A_{T_{N+1}}$.  By \Cref{lem:aa}, $\lim_{i \to \infty} S_i$ is the projection $\pi$ onto the subspace $\bigcap_{T \in \mathcal{I}} \{x\in\mathbb{R}^{V(G)}: A_{T}x=x\}$. 
    Thus, $w:=\lim_{i \to \infty} w_i=\lim_{i \to \infty} S_iw_N=\pi w_N$.
    
    For the second part, let $H$ be the hypergraph with vertex set $V(G)$ and edge set $\mathcal{I}$. Let $H_1, \dots, H_c$ be the connnected components of $H$.  Instead of applying the infinite sequence of sharing moves from $w_N$, 
        we apply the finite sequence of sharing moves $V(H_1), \dots, V(H_c)$. Since $w=\pi w_N$, the resulting weight distribution is identical to $w$ and shows that $w$ is reachable from $w_1$.  
\end{proof}

We are now ready to prove that $\mathcal{R}(w)$ is compact. The main idea is to apply Zorn's lemma on a suitably defined poset.  

\begin{theorem}
    \label{thm:compactness}
    For every finite graph~$G$ and every weight distribution~$w$ on~$G$, the set~$\mathcal{R}(w)$ is a compact subset of~$\mathbb{R}_{\geq 0}^{V(G)}$. 
\end{theorem}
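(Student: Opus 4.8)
The plan is to exploit that in the finite-dimensional space $\weights$, compactness is equivalent to being closed and bounded. Boundedness is immediate: by \Cref{lem:matrices} every reachable distribution is $Aw$ for a doubly stochastic matrix $A$, and such matrices preserve the total weight $w(G)$; hence $\mathcal{R}(w)$ is contained in the bounded set $\{u \in \weights : u(G) = w(G)\}$. All of the work therefore goes into showing that $\mathcal{R}(w)$ is \emph{closed}.

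Fix a point $w^\infty \in \overline{\mathcal{R}(w)}$; the goal is to produce a \emph{finite} sequence of sharing moves realizing it. I equip $\mathcal{R}(w)$ with the relation $u \preceq u'$ meaning ``$u'$ is reachable from $u$''. This is a partial order: reflexivity uses singleton moves, transitivity is concatenation of move sequences, and antisymmetry follows from \Cref{obs:normdecreasing}, since $u \preceq u' \preceq u$ forces $\norm{u}=\norm{u'}$ and a norm-preserving sequence of moves must fix the distribution at every step. I then pass to the subposet $P := \{u \in \mathcal{R}(w) : w^\infty \in \overline{\mathcal{R}(u)}\}$, which contains $w$ by the choice of $w^\infty$. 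The strategy is to apply Zorn's lemma to $P$ to obtain a $\preceq$-maximal element $m$, and then to prove $m = w^\infty$; since $m \in \mathcal{R}(w)$, this exhibits $w^\infty$ as reachable.

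For the maximality step I would argue by contradiction. If $m \neq w^\infty$, choose $z_j \in \mathcal{R}(m)$ with $z_j \to w^\infty$; for large $j$ we have $z_j \neq m$, so the move sequence producing $z_j$ contains a first move that changes $m$, yielding a one-step successor $m_j \succ m$ with $z_j \in \mathcal{R}(m_j)$. Here the finiteness of $G$ is decisive: there are only finitely many connected sets $T \subseteq V(G)$, hence only finitely many possible successors $A_T m$, so a single value $m'$ occurs for infinitely many $j$. Restricting to that subsequence gives $z_j \in \mathcal{R}(m') \subseteq \overline{\mathcal{R}(m')}$ with $z_j \to w^\infty$, so $m' \in P$ with $m \prec m'$, contradicting maximality. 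Thus $m = w^\infty$.

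The main obstacle is verifying the hypothesis of Zorn's lemma, namely that every chain $C \subseteq P$ has an upper bound in $P$. If $C$ has a greatest element we are done; otherwise, since $\norm{\cdot}$ is strictly order-reversing along $C$ (again \Cref{obs:normdecreasing}), I extract a cofinal $\preceq$-increasing sequence $u_1 \preceq u_2 \preceq \cdots$ in $C$ whose norms decrease to $\inf_{u \in C}\norm{u}$. By \Cref{converge}, the limit $b := \lim_k u_k$ exists and is reachable from $u_1$, so $b \in \mathcal{R}(w)$, and cofinality gives $u \preceq b$ for every $u \in C$. The delicate point is that $b$ must lie in $P$, i.e.\ $w^\infty \in \overline{\mathcal{R}(b)}$, and I expect this to be the real difficulty, since reachability is not symmetric and the sets $\mathcal{R}(b) \subseteq \mathcal{R}(u_k)$ shrink as $k$ grows. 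I would resolve it by passing to operators: the matrices in $\mathcal{R}$ are doubly stochastic, so $\mathcal{R}$ lies in a compact set and has compact closure $\overline{\mathcal{R}}$, and continuity of $A \mapsto Au$ yields $\overline{\mathcal{R}(u)} = \{Au : A \in \overline{\mathcal{R}}\}$ for every $u$. Since each $u_k \in P$, there is $A_k \in \overline{\mathcal{R}}$ with $A_k u_k = w^\infty$; by compactness a subsequence satisfies $A_k \to A \in \overline{\mathcal{R}}$, and then $A b = \lim_k A_k u_k = w^\infty$, so $w^\infty \in \overline{\mathcal{R}(b)}$ and $b \in P$ is the required upper bound. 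With the chain condition established, Zorn's lemma applies and the proof concludes.
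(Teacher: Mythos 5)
Your proof is correct, and its skeleton is the same as the paper's: reduce compactness to closedness, put the partial order ``is reachable from'' on the set of reachable distributions from which the limit point is still approachable (your $P=\{u\in\mathcal{R}(w): w^\infty\in\overline{\mathcal{R}(u)}\}$ is literally the paper's $P=\{u\in\mathcal{R}(w):\sigma(u)=0\}$), apply Zorn's lemma, and derive a contradiction from a non-optimal maximal element by pigeonholing over the finitely many one-step successors $A_Tm$. Where you genuinely diverge is in verifying the chain condition. The paper extracts a cofinal increasing sequence from a chain by observing that $P$ is countable and embedding the chain into $\mathbb{Q}$; you instead use \Cref{obs:normdecreasing} to note that $\norm{\cdot}$ is strictly order-reversing on a chain and pick a sequence whose norms decrease to the infimum --- both work, and yours avoids the countability digression. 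More substantially, to show the limit $b$ of that sequence still lies in $P$, the paper proves that $\sigma(u)=\inf\{\norm{\hat w-u'}: u'\in\mathcal{R}(u)\}$ is continuous (as a pointwise infimum of $1$-Lipschitz functions), whereas you pass to the compact closure $\overline{\mathcal{R}}$ of the set of doubly stochastic matrices, identify $\overline{\mathcal{R}(u)}=\{Au: A\in\overline{\mathcal{R}}\}$, and extract a convergent subsequence of operators $A_k$ with $A_ku_k=w^\infty$. Your operator argument is a clean substitute for the paper's equicontinuity claim and arguably makes the role of compactness more transparent; the paper's $\sigma$ is slightly more elementary in that it never leaves $\mathbb{R}^{V(G)}$. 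One point to make explicit when writing this up: the conclusion that $b$ is an upper bound for the whole chain needs $u_k\preceq b$ for every $k$, not just $k=1$, which you get by applying \Cref{converge} to each tail $(u_k,u_{k+1},\dots)$ --- the same implicit step the paper takes.
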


\begin{proof} 
    Since $\mathcal{R}(w)$ is bounded and $V(G)$ is finite, it suffices to show that $\mathcal{R}(w)$ is closed. 
    Let~$(w_i)_{i \in \mathbb{N}}$ be a convergent sequence of weight distributions on~$G$ in~$\mathcal{R}(w)$ and let ${\hat{w} \coloneqq \lim_{i \to \infty} w_i}$. It suffices to show that $\hat{w} \in \mathcal{R}(w)$.  
    We define~${\sigma \colon \mathbb{R}_{\geq 0}^{V(G)}} \to \mathbb{R}$ by
    \[
        \sigma(x) \coloneqq \inf \big\{ \norm{\hat{w} - u} \,\colon\, u \in \mathcal{R}(x) \big\}. 
    \]
    Since~${w_i \in \mathcal{R}(w)}$ for all~${i \in \mathbb{N}}$, we have that~${0 \leq \sigma(w) \leq \inf \{ \norm{\hat{w} - w_i} \,\colon\, i \in \mathbb{N} \} = 0}$.  Hence, ${\sigma(w) = 0}$. 
    \begin{claim*}
        $\sigma$ is continuous.
    \end{claim*}
    
    \begin{subproof}
        Let~${f \colon \mathbb{R}^{V(G)} \to \mathbb{R}}$ be defined by~${f(x) = \norm{x-\hat{w}}}$. 
        By the triangle inequality, $\abs{f(x)-f(y)} \leq \norm{x-y}$ for all~${x,y \in \mathbb{R}^{V(G)}}$. 
        In other words, $f$ is a Lipschitz function with Lipschitz constant~${K = 1}$. 
        By~\Cref{obs:normdecreasing}, for each~${A \in \mathcal{R}}$, the map~${x \mapsto Ax}$ is a Lipschitz function with Lipschitz constant~${K = 1}$. 
        Therefore, $\sigma$ is the pointwise infimum of a family~$\mathcal{F}$ of Lipschitz functions, each with Lipschitz constant~${K = 1}$. 
        It follows that the family~$\mathcal{F}$ is equicontinuous, and hence~$\sigma$ is continuous. 
    \end{subproof}
    
    Consider the set 
    \[
        P \coloneqq \{ u \in \mathcal{R}(w)  \mid \sigma(u) = 0 \}
    \]
    together with the partial order
    \[
        u \preceq u' \textnormal{ if and only if } u' \textnormal{ is reachable from } u.
    \]
    Note that~${(P, \preceq)}$ is a countable partially ordered set. 
    Let~$C$ be a chain in~$P$. 
    Since every countable linear order is a suborder of~$\mathbb{Q}$, there is a subset~$I$ of~$\mathbb{Q}$ and an order-preserving bijection~${\phi \colon I \to C}$. 
    Let~${(q_i)_{i \in \mathbb{N}}}$ be a non-decreasing sequence of elements in~$I$ such that~${\lim_{i \to \infty} q_i = \sup I}$.    
    
    By \Cref{converge}, there exists~${u \in \weights}$ such that~${\lim_{i \to \infty} \phi(q_i) = u}$ and~$u$ is reachable from~$\phi(q_i)$ for all $i \in \mathbb{N}$.  
    Hence, ${u \in \mathcal{R}(w)}$. 
    Moreover,
    \[
       0= \sigma(w) 
        = \lim_{i \to \infty} \sigma(\phi(q_i)) 
        = \sigma(\lim_{i \to \infty} \phi(q_i)) 
        = \sigma(u), 
    \]
    where the third equality follows from the continuity of~$\sigma$. 
    Therefore, ${u \in P}$.  
    
    For each~${c \in C}$, since~${\lim_{i \to \infty} q_i = \sup I}$, there is some~${n \in \mathbb{N}}$ such that~${c \preceq \phi(q_n)}$. 
    Thus, ${c \preceq \phi(q_n) \preceq u}$. 
    So, $u$ is an upper bound for~$C$. 
    
    We have proved that each chain in~$P$ has an upper bound. 
    By Zorn's Lemma, we conclude that~$P$ has a maximal element~$w'$. 
    Suppose~${\norm{\hat{w} - w'} > 0}$. 
    Since~${\sigma(w') = 0}$, and there are only finitely many sharing moves, there must be a vertex set~$T$ on which we can apply the sharing move such that~${w' \neq A_Tw'}$ and~${\sigma(w') = \sigma(A_Tw')}$. 
    However, this contradicts that~$w'$ is a maximal element of~$P$. 
    Therefore, ${\norm{\hat{w} - w'} = 0}$ and hence~${\hat{w} = w' \in \mathcal{R}(w)}$, as required.
\end{proof}

The compactness of~$\mathcal{R}(w)$ immediately implies the following corollary. 

\begin{corollary}
    \label{cor:supremum}
    Let~$G$ be a finite graph, let~${w \in \weights}$, and let~${f \colon \weights \to \mathbb{R}}$ be any continuous function. 
    Then there exists~${w' \in \mathcal{R}(w)}$ such that ${\sup_{u \in \mathcal{R}(w)} f(u) = f(w')}$. 
\end{corollary}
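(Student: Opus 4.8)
The plan is to deduce this immediately from the compactness of $\mathcal{R}(w)$ established in \Cref{thm:compactness}, together with the elementary fact that a continuous real-valued function attains its supremum on a nonempty compact set. Since the substantive work is already contained in \Cref{thm:compactness}, this corollary should be a short extreme-value-theorem argument.

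Concretely, I would first note that $\mathcal{R}(w)$ is nonempty, since $w \in \mathcal{R}(w)$ (take the empty sequence of moves, or a singleton sharing move). Writing $s \coloneqq \sup_{u \in \mathcal{R}(w)} f(u)$, there is then a sequence $(u_i)_{i \in \mathbb{N}}$ in $\mathcal{R}(w)$ with $f(u_i) \to s$. By \Cref{thm:compactness}, $\mathcal{R}(w)$ is a compact subset of $\weights$; as $V(G)$ is finite, $\weights$ carries the standard Euclidean topology and compactness coincides with sequential compactness. I would therefore pass to a subsequence $(u_{i_k})$ converging to some $w' \in \mathcal{R}(w)$. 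Continuity of $f$ then gives $f(w') = \lim_k f(u_{i_k}) = s$, so in particular $s$ is finite and is attained at $w'$, which is precisely the assertion.

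The nearest thing to an obstacle is purely bookkeeping: one must confirm that $f(w') = s$ really is the supremum over \emph{all} of $\mathcal{R}(w)$ and not merely over the chosen sequence. But this is automatic, since $s$ is by definition an upper bound for $f$ on $\mathcal{R}(w)$ and we have exhibited a point where it is achieved. Equivalently, once \Cref{thm:compactness} is in hand, one may simply invoke the extreme value theorem for continuous functions on compact metric spaces and be done in a single line. There are no delicate estimates here; the entire content of the corollary is the transfer of the compactness of $\mathcal{R}(w)$ to the existence of a maximizer of $f$.
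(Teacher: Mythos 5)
Your proposal is correct and matches the paper's approach: the paper simply states that the corollary follows immediately from the compactness of $\mathcal{R}(w)$ established in \Cref{thm:compactness}, and your argument is exactly that standard extreme value theorem deduction, spelled out via sequential compactness.
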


\Cref{cor:supremum} has several important consequences.  Recall that 
\[
    w^*(v) = \sup \{w'(v) \mid \text{$w' \in \mathcal{R}(w)$} \}.
\]
We say that a finite sequence~${(T_1, \dots, T_m)}$ of sharing moves is \emph{($w, v)$-optimal} if the weight distribution~$w'$ on~$G$ obtained from~$w$ by sequentially applying~${T_1, \dots, T_m}$ satisfies~${w'(v) = w^*(v)}$. 

\begin{corollary}
    \label{cor:wvoptimal}
    Let~$G$ be a finite graph, let~${w \in \weights}$ and let~${v \in V(G)}$. 
    Then there exists a $(w,v)$-optimal sequence.
\end{corollary}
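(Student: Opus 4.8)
The plan is to deduce this directly from \Cref{cor:supremum}. First I would consider the coordinate evaluation map ${f \colon \weights \to \mathbb{R}}$ defined by~${f(u) \coloneqq u(v)}$. This $f$ is the restriction to~$\weights$ of a coordinate projection of the finite-dimensional space~$\mathbb{R}^{V(G)}$, hence linear and in particular continuous. Therefore \Cref{cor:supremum} applies and yields some~${w' \in \mathcal{R}(w)}$ with~${f(w') = \sup_{u \in \mathcal{R}(w)} f(u)}$. By the definition of~$w^*(v)$, the right-hand side is exactly~$w^*(v)$, so this~$w'$ satisfies~${w'(v) = w^*(v)}$.

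It then remains only to extract a finite sequence of moves. Since~${w' \in \mathcal{R}(w)}$, the very definition of reachability provides a finite sequence~${(T_1, \dots, T_m)}$ of sharing moves transforming~$w$ into~$w'$. Because~${w'(v) = w^*(v)}$, this sequence is ${(w,v)}$-optimal by definition, which completes the argument.

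I do not expect a genuine obstacle here: all the real difficulty has already been absorbed into \Cref{thm:compactness} (which rests on \Cref{converge}, \Cref{lem:aa}, and Zorn's Lemma), and its compactness conclusion, repackaged in \Cref{cor:supremum}, is precisely what forces the supremum defining~$w^*(v)$ to be attained rather than merely approached. The only points worth stating carefully are that~$f$ is continuous---immediate from linearity of coordinate projection on~$\mathbb{R}^{V(G)}$---and that ``reachable'' is defined through \emph{finite} sequences of sharing moves, which is exactly what lets us upgrade the abstract element~${w' \in \mathcal{R}(w)}$ into an explicit optimal sequence and thereby match the definition of $(w,v)$-optimality.
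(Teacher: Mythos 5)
Your proposal is correct and is exactly the paper's own argument: apply \Cref{cor:supremum} to the continuous coordinate functional $f(u)\coloneqq u(v)$ and unwind the definition of reachability to get the finite optimal sequence. No differences worth noting.
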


\begin{proof}
    Apply~\Cref{cor:supremum} with~${f \colon \weights \to \mathbb{R}}$ defined by~${f(u) \coloneqq u(v)}$.
\end{proof}

Compactness of~${\mathcal{R}(w)}$ also implies that  the maximum amount of tea on a fixed set of vertices~$X$ can be achieved after a finite number of sharing moves.  

\begin{corollary}
    Let~$G$ be a finite graph,~${w \in \weights}$,~${X \subseteq V(G)}$, and~${s \coloneqq \sup_{u \in \mathcal{R}(w)} u(X)}$.  
    Then there exists~${w' \in \mathcal{R}(w)}$ such that~${w'(X) = s}$. 
\end{corollary}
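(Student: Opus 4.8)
The plan is to derive this directly from \Cref{cor:supremum} by choosing the right continuous function. Define~${f \colon \weights \to \mathbb{R}}$ by~${f(u) \coloneqq u(X) = \sum_{x \in X} u(x)}$. First I would verify that~$f$ is continuous: it is a finite sum of the coordinate evaluations~${u \mapsto u(x)}$, each of which is linear and hence continuous on the finite-dimensional space~$\weights$. In fact~$f$ is itself linear, so its continuity is automatic.

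Having checked continuity, I would simply invoke \Cref{cor:supremum} with this~$f$. That corollary, which rests on the compactness of~$\mathcal{R}(w)$ established in \Cref{thm:compactness}, produces some~${w' \in \mathcal{R}(w)}$ with~${f(w') = \sup_{u \in \mathcal{R}(w)} f(u)}$. Unwinding the definition of~$f$, this equality reads~${w'(X) = \sup_{u \in \mathcal{R}(w)} u(X) = s}$, which is exactly the desired conclusion.

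I expect no genuine obstacle here, since all the real work has already been absorbed into the compactness of~$\mathcal{R}(w)$ and its consequence \Cref{cor:supremum}; the only thing to confirm is the continuity of~$f$, which is immediate. This argument runs entirely parallel to the proof of \Cref{cor:wvoptimal}, with the single-coordinate evaluation~${u \mapsto u(v)}$ there replaced by the set-sum~${u \mapsto u(X)}$ over the prescribed subset~$X$.
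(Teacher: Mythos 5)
Your proposal is correct and coincides with the paper's own one-line proof, which likewise applies \Cref{cor:supremum} to the continuous (indeed linear) function ${f(u) \coloneqq u(X)}$. The extra verification of continuity you include is immediate and harmless.
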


\begin{proof}
        Apply~\Cref{cor:supremum} with~${f \colon \weights \to \mathbb{R}}$ defined by~${f(u) \coloneqq u(X)}$.
\end{proof}

However, for infinite graphs, ${(w,v)}$-optimal sequences do not always exist, even if $w^*(v)$ is finite.  

\begin{prop}
    \label{prop:infinitestar}
    Let~${G \coloneqq K_{1,\aleph_0}}$ be the countable star with centre~$v$ and leaves~${\{ v_i \mid i \in \mathbb{N} \}}$, and let~${w \in \weights}$ with ${w(v) = 0}$ and~${w(v_i) = 2^{-i}}$ for all $i \in \mathbb{N}$. 
    Then there is no $(w, v)$-optimal sequence.
\end{prop}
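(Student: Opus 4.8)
The plan is to reduce the infinite star to its finite truncations and to exploit that the achievable maximum strictly increases as leaves are added. For each ${N \in \mathbb{N}}$ let ${G_N := K_{1,N}}$ be the substar with centre~$v$ and leaves ${v_1, \dots, v_N}$, equipped with the restriction of~$w$, and let~${w^*_N(v)}$ denote the corresponding maximum amount of tea at~$v$ on~$G_N$ (that is, ${w^*(v)}$ in the sense of~\eqref{eq:wstar} applied to~$G_N$). The first step is the reduction: since every sharing move is on a finite connected set and a finite sequence contains only finitely many moves, any finite sequence of sharing moves on~$G$ touches only finitely many leaves, say among ${v_1, \dots, v_N}$. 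Because a connected subset of a star containing two leaves must contain~$v$, such a sequence is a legal sequence of sharing moves on~$G_N$, and the untouched leaves ${v_{N+1}, v_{N+2}, \dots}$ never interact with~$v$; hence the resulting value~${w'(v)}$ equals the value produced by the very same sequence on~$G_N$, so ${w'(v) \le w^*_N(v)}$.

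Next I would compute~${w^*_N(v)}$ and show that it is strictly increasing in~$N$. Since ${w(v) = 0}$ and the leaves satisfy ${w(v_1) > w(v_2) > \cdots > 0}$, \Cref{cor-finitestars} gives ${w^*_N(v) = f_{(v_N, v_{N-1}, \dots, v_1); w}(0)}$, where the leaves are listed in increasing order of weight (lightest first). Using the interpretation of~\eqref{eq:recursion} — equivalently, the ``peel off the first move'' identity ${f_{(x_1, \dots, x_t); w}(a) = f_{(x_2, \dots, x_t); w}(f_{(x_1); w}(a))}$, which one checks directly from the definition of~$f$ — together with ${f_{(v_{N+1}); w}(0) = \frac{1}{2} w(v_{N+1}) = 2^{-(N+2)} > 0}$, I obtain ${w^*_{N+1}(v) = f_{(v_N, \dots, v_1); w}\!\big(2^{-(N+2)}\big)}$. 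By \Cref{lem-increasing} the map ${a \mapsto f_{(v_N, \dots, v_1); w}(a)}$ is strictly increasing, so ${w^*_{N+1}(v) = f_{(v_N, \dots, v_1); w}\!\big(2^{-(N+2)}\big) > f_{(v_N, \dots, v_1); w}(0) = w^*_N(v)}$.

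Finally, each $G_N$-optimal sequence is itself a valid sequence on~$G$, so ${w^*_N(v) \le w^*(v)}$ for every~$N$; in particular ${w^*(v)}$ is finite (bounded by ${w(G) = 1}$) and equals ${\lim_{N \to \infty} w^*_N(v)}$. Combining with the reduction, every finite sequence of sharing moves produces, for a suitable~$N$, a value ${w'(v) \le w^*_N(v) < w^*_{N+1}(v) \le w^*(v)}$, hence ${w'(v) < w^*(v)}$ strictly. Therefore no finite sequence attains~${w^*(v)}$, i.e.\ there is no ${(w,v)}$-optimal sequence.

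The main obstacle is pinning down the correct order in which leaves are shared. A naive attempt would extend a near-optimal sequence on~$G_N$ by one further edge-sharing move on~$v_{N+1}$; but for large~$N$ the tea already accumulated at~$v$ exceeds ${w(v_{N+1}) = 2^{-(N+1)}}$, so such a move would \emph{decrease}~${w(v)}$ and give no gain. \Cref{cor-finitestars} resolves this by prescribing that the lightest (newest) leaf be shared \emph{first}, before~$v$ has accumulated tea; quantifying the resulting strict gain through the strict monotonicity of~$f$ in its argument is the crux of establishing strictness.
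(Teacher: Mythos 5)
Your proof is correct and follows essentially the same route as the paper: reduce any finite sequence of sharing moves to a finite substar, invoke \Cref{cor-finitestars} to see that the optimum on a strictly larger substar is strictly larger, and derive a contradiction. The only difference is that you spell out the strict increase ${w^*_N(v) < w^*_{N+1}(v)}$ via the peel-off recursion and \Cref{lem-increasing}, a step the paper leaves implicit in its appeal to \Cref{cor-finitestars}.
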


\begin{proof}
    Clearly~$w^*(v)$ is finite since~${w^*(v) \leq w(G) = 1}$. 
    For each subgraph~$H$ of~$G$, we let 
    \[
        w^*(v,H) \coloneqq \sup \{ w(v) \mid \text{$w$ is reachable from $w$ by sharing on connected subsets of $H$} \}. 
    \]
    If there is a ${(w, v)}$-optimal sequence~${(T_1, \dots, T_m)}$, then~${w^*(v) = w^*(v,H)}$ for some finite subgraph~$H$ of~$G$ (since~$m$ is finite and each~$T_i$ is finite). Let $H'$ be any finite subgraph of~$G$ which contains~$H$ properly. 
    By~\cref{cor-finitestars}, ${w^*(v, H') > w^*(v,H)}$.
    Thus,
    \[
        w^*(v) = w^*(v, H) < w^*(v, H') \leq w^*(v),
    \]
    which is a contradiction.  
\end{proof}

\Cref{prop:infinitestar} shows that~$\mathcal{R}(w)$ need not be compact for infinite graphs.   

\section{Duality}
\label{sec:duality}
 Let $G$ be a graph and ${w \in \mathbb{R}_{\geq 0}^{V(G)}}$.
In this section, we prove that the tea sharing procedure on $G$ satisfies a simple duality relation. For instance, we show that if $w \in \{0,1\}^{V(G)}$, then maximizing the amount of tea at a fixed vertex $v$ is the same as starting from $\mathds{1}_{\{v\}}$ and maximizing the total amount of tea on $X$, where $X$ is the support of $w$. 

 We begin by noting that every ${c \in \mathbb{R}^{V(G)}}$ can be viewed as a linear functional on~$\mathcal{R}(w)$ via the inner product 
\[
    \gen{c,u} \coloneqq \sum_{v\in V(G)} c(v)u(v).
\]

\begin{theorem} \label{thm:duality}
    Let~$G$ be a graph, ${c \in \mathbb{R}^{V(G)}}$, ${w \in \weights}$, and~${w' \coloneqq A_{T_k} \cdots A_{T_1}w \in \mathcal{R}(w)}$. 
    Then,
    \[
        \gen{c,w'} = \gen{w,c'},
    \]
    where~${c' \coloneqq A_{T_1} \cdots A_{T_k} c}$.
\end{theorem}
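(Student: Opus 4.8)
The plan is to exploit the symmetry of each sharing matrix together with the self-adjointness of the inner product. The crucial structural observation is already built into the definition of $A_T$ in \Cref{fig:matrix}: we have $A_T(x,y) = A_T(y,x)$ for all $x,y$, so each $A_{T_i}$ is a \emph{symmetric} matrix, i.e. $A_{T_i}^{\mathsf{T}} = A_{T_i}$. Since $\gen{\cdot,\cdot}$ is the standard Euclidean inner product, this means each $A_{T_i}$ is self-adjoint: for all vectors $a,b \in \mathbb{R}^{V(G)}$, we have $\gen{a, A_{T_i} b} = \gen{A_{T_i} a, b}$.

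The heart of the argument is then to migrate the operators one at a time from the $w$-side to the $c$-side. First I would write $\gen{c, w'} = \gen{c, A_{T_k} \cdots A_{T_1} w}$ and peel off the outermost operator using self-adjointness: $\gen{c, A_{T_k}(A_{T_{k-1}} \cdots A_{T_1} w)} = \gen{A_{T_k} c, A_{T_{k-1}} \cdots A_{T_1} w}$. Repeating this for $A_{T_{k-1}}, A_{T_{k-2}}, \ldots, A_{T_1}$ in turn moves each operator across the inner product, and after $k$ steps we arrive at $\gen{A_{T_1} \cdots A_{T_k} c, w} = \gen{c', w}$. A clean way to package this induction is to prove by downward induction on $j \in \{0,1,\ldots,k\}$ the identity
\[
    \gen{c, A_{T_k} \cdots A_{T_1} w} = \gen{A_{T_{j+1}} A_{T_{j+2}} \cdots A_{T_k} c,\; A_{T_j} \cdots A_{T_1} w},
\]
with the base case $j=k$ being trivial and the inductive step being a single application of self-adjointness. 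Taking $j=0$ yields $\gen{c, w'} = \gen{c', w} = \gen{w, c'}$, where the last equality is just symmetry of the inner product.

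Equivalently, and perhaps more transparently, one can argue at the matrix level: since $\gen{c, w'}$ is a scalar it equals its own transpose, so
\[
    \gen{c, w'} = c^{\mathsf{T}} A_{T_k} \cdots A_{T_1} w = \bigl(c^{\mathsf{T}} A_{T_k} \cdots A_{T_1} w\bigr)^{\mathsf{T}} = w^{\mathsf{T}} A_{T_1}^{\mathsf{T}} \cdots A_{T_k}^{\mathsf{T}} c = w^{\mathsf{T}} A_{T_1} \cdots A_{T_k} c = \gen{w, c'},
\]
where the reversal of the product order produced by transposition is exactly what converts $A_{T_k}\cdots A_{T_1}$ into $A_{T_1}\cdots A_{T_k}$, matching the definition of $c'$, and the penultimate equality uses $A_{T_i}^{\mathsf{T}} = A_{T_i}$.

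There is no substantive obstacle here; the proof is a one-line consequence of symmetry. The only point that warrants care is bookkeeping the order of the indices: the definition of $w'$ applies the moves in the order $T_1, \ldots, T_k$ (so the matrix product reads right-to-left as $A_{T_k}\cdots A_{T_1}$), whereas $c'$ is defined with the operators in the opposite product order $A_{T_1}\cdots A_{T_k}$. Recognizing that this reversal is precisely the effect of transposing a matrix product is what makes the two sides coincide, so the statement is really a clean reformulation of the fact that the dual (adjoint) of a composition of self-adjoint sharing operators applies the same operators in reverse.
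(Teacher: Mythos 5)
Your proof is correct and is essentially identical to the paper's: both rest on the observation that each $A_{T_i}$ is symmetric, hence self-adjoint with respect to the Euclidean inner product, so the product $A_{T_k}\cdots A_{T_1}$ can be moved across the inner product with the factors reversed. The paper's version is a one-line computation; your inductive/transpose packaging adds only bookkeeping.
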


\begin{proof}
    Since each matrix $A_{T_i}$ is symmetric, 
    \[
       \gen{c,w'} 
       = \gen{c, A_{T_k} \cdots A_{T_1}w} 
       = \gen{w, A_{T_1} \cdots A_{T_k}c}. 
       \qedhere
    \]
\end{proof}

As an immediate corollary of~\Cref{thm:duality}, the following is our claimed duality relation. 

\begin{corollary} \label{cor:switch}
    Let~$G$ be a graph, and~${c,w \in \weights}$. 
    Then, 
    \[
        \sup_{w' \in \mathcal{R}(w)} \gen{c,w'} 
        = \sup_{c' \in \mathcal{R}(c)} \gen {w,c'}.
    \]
\end{corollary}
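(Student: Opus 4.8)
The plan is to deduce this directly from \Cref{thm:duality} by establishing the two opposing inequalities and then invoking the symmetry between $c$ and $w$. The key observation is that \Cref{thm:duality} provides, for each reachable weight distribution, a value-preserving \emph{dual} point on the other side of the identity, so the entire argument reduces to checking that this dual point actually lies in the relevant reachable set.

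First I would prove that $\sup_{w' \in \mathcal{R}(w)} \gen{c,w'} \leq \sup_{c' \in \mathcal{R}(c)} \gen{w,c'}$. Fix an arbitrary $w' \in \mathcal{R}(w)$. By the definition of reachability we may write $w' = A_{T_k} \cdots A_{T_1} w$ for some finite sequence $(T_1, \dots, T_k)$ of sharing moves on $G$. Set $c' \coloneqq A_{T_1} \cdots A_{T_k} c$, exactly as in \Cref{thm:duality}. I would then observe that $c'$ is obtained from $c$ by applying the sharing moves $T_k, T_{k-1}, \dots, T_1$ in this (reversed) order; since $c \in \weights$ and each $T_i$ is a finite connected vertex set, this is a legitimate finite sequence of sharing moves, so $c' \in \mathcal{R}(c)$. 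By \Cref{thm:duality} we have $\gen{c,w'} = \gen{w,c'} \leq \sup_{c'' \in \mathcal{R}(c)} \gen{w,c''}$. Taking the supremum over all $w' \in \mathcal{R}(w)$ yields the claimed inequality. The reverse inequality follows by running the identical argument with the roles of $c$ and $w$ exchanged, and combining the two gives equality.

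I do not anticipate any substantial obstacle here, since the heavy lifting, namely the symmetry of each matrix $A_{T_i}$, has already been carried out in \Cref{thm:duality}. The only point that requires a moment of care is confirming that the reversed matrix product $A_{T_1} \cdots A_{T_k} c$ genuinely represents an element of $\mathcal{R}(c)$: this is immediate once one notes that reversing the order of a finite sequence of sharing moves is again a finite sequence of sharing moves, and that sharing moves preserve nonnegativity, so the dual point never leaves $\weights$.
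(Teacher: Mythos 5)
Your proof is correct and is exactly the argument the paper intends: the paper states \Cref{cor:switch} as an immediate consequence of \Cref{thm:duality} without writing out the details, and your write-up simply makes explicit the two steps involved (that the reversed product $A_{T_1}\cdots A_{T_k}c$ is a genuine element of $\mathcal{R}(c)$, and that the two inequalities follow by symmetry). No gaps; this matches the paper's approach.
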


Roughly speaking, \Cref{cor:switch} shows that for all~${c,w \in \weights}$, the roles of $c$ and $w$ can be interchanged.

\begin{corollary}
        Let $G$ be a graph, $v \in V(G)$, $X \subseteq V(G)$ and~${w  \coloneqq \mathds{1}_X}$. 
    Then
    \[
        w^*(v) 
        = \sup\{u(X) \mid u \in \mathcal{R}(\mathds{1}_{\{v\}})\}.
    \] 
    
    Moreover, if~$X$ is finite and $G$ is connected, then
    \[
        w^*(v) \leq \abs{X} \left( 1 - \prod_{x \in X} \frac{d_G(v,x)}{d_G(v,x)+1} \right).
    \]
\end{corollary}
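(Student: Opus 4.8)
The plan is to prove the two assertions separately: the equality is an instantaneous consequence of the duality relation, while the inequality follows from the feasibility machinery of \Cref{sec:first}.

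For the equality I would specialize \Cref{cor:switch}, taking $c \coloneqq \mathds{1}_{\{v\}}$ and $w \coloneqq \mathds{1}_X$. Since $\gen{\mathds{1}_{\{v\}}, w'} = w'(v)$ for every $w'$, the definition of $w^*$ gives $\sup_{w' \in \mathcal{R}(\mathds{1}_X)} \gen{c, w'} = \sup\{w'(v) \mid w' \in \mathcal{R}(\mathds{1}_X)\} = w^*(v)$. On the other hand $\gen{\mathds{1}_X, c'} = c'(X)$, so the right-hand side of \Cref{cor:switch} is exactly $\sup\{u(X) \mid u \in \mathcal{R}(\mathds{1}_{\{v\}})\}$. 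This yields the claimed identity with no further computation, and it requires no finiteness of $X$ or $G$, since the symmetry argument behind \Cref{thm:duality} is valid for arbitrary graphs (each $c'$ is reachable from $\mathds{1}_{\{v\}}$ by finitely many finite sharing moves, so $c'(X)$ is a finite quantity bounded by $c'(G)=1$).

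For the inequality I would use the equality just established to rewrite $w^*(v) = \sup\{u(X) \mid u \in \mathcal{R}(\mathds{1}_{\{v\}})\}$ and then bound $u(X)$ for a single $u$. The key observation is that $\mathds{1}_{\{v\}}$ is $v$-feasible with $\mathds{1}_{\{v\}}(G) = 1$, and that feasibility is preserved along the procedure: \Cref{prop:main} shows that every $u'$ edge-reachable from $\mathds{1}_{\{v\}}$ is $v$-feasible, and because sharing moves are approximated by edge-sharing moves and the feasibility inequality~\eqref{eq:set} is, for each fixed finite $S$, a closed condition, every $u \in \mathcal{R}(\mathds{1}_{\{v\}})$ is $v$-feasible as well. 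Applying~\eqref{eq:set} with $S \coloneqq X$ (which is finite) and $u(G) = 1$ gives $u(X) \leq 1 - \prod_{x \in X} \frac{d_G(v,x)}{d_G(v,x)+1}$. Since this product lies in $[0,1]$, the bound $1 - \prod_{x \in X}\frac{d_G(v,x)}{d_G(v,x)+1}$ is nonnegative, and multiplying by $\abs{X} \geq 1$ only increases it, which delivers the stated estimate.

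In fact this route proves the sharper inequality $w^*(v) \leq 1 - \prod_{x \in X} \frac{d_G(v,x)}{d_G(v,x)+1}$, and the factor $\abs{X}$ is included only to match a convenient weaker form. I therefore expect the sole delicate point to be expository rather than mathematical: confirming that~\eqref{eq:set}, being a pointwise-closed family of inequalities, survives the limit implicit in approximating sharing moves by edge-sharing moves, so that \Cref{prop:main} (stated for edge-reachable distributions) transfers to all of $\mathcal{R}(\mathds{1}_{\{v\}})$. Everything else is routine bookkeeping. It is worth noting that this argument works for possibly infinite $G$, which is why I prefer the duality-plus-feasibility route over a direct application of the finite-graph \Cref{prop:main2}.
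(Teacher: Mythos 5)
Your proof is correct and takes essentially the same route as the paper's: the equality is exactly \Cref{cor:switch} with $c=\mathds{1}_{\{v\}}$ and $w=\mathds{1}_X$, and the inequality follows from the $v$-feasibility of $\mathds{1}_{\{v\}}$ preserved along the procedure (the paper simply cites \Cref{prop:main}; your extra care in passing from edge-reachable to reachable distributions is what \Cref{prop:mainstrong} supplies directly). Your observation that this argument in fact gives the sharper bound $w^*(v)\le 1-\prod_{x\in X}\frac{d_G(v,x)}{d_G(v,x)+1}$, with the factor $\abs{X}$ superfluous, is also correct.
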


\begin{proof}
    For the first part, take $c=\mathds{1}_{\{v\}}$ and ${w  \coloneqq \mathds{1}_X}$ in~\Cref{cor:switch}.  Since $\mathds{1}_{\{v\}}$ is $v$-feasible, the second part follows from~\Cref{prop:main}.  
\end{proof}

For finite graphs, compactness of~${\mathcal{R}(w)}$ also implies that linear functionals attain their supremum on~${\mathcal{R}(w)}$.

\begin{corollary}
    Let~$G$ be a finite graph,~${w \in \weights}$,~${c \in \mathbb{R}^{V(G)}}$, and~${s \coloneqq \sup_{u \in \mathcal{R}(w)} \gen{c,u}}$.  
    Then there exists~${w' \in \mathcal{R}(w)}$ such that~${\gen{c,w'} = s}$. 
\end{corollary}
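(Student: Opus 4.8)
The plan is to recognize that this statement is an immediate specialization of~\Cref{cor:supremum}, which already asserts that \emph{every} continuous function on $\mathcal{R}(w)$ attains its supremum. The only thing that needs checking is that the map induced by $c$ is continuous, and for a linear functional on a finite-dimensional space this is automatic.

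Concretely, I would first define the function $f \colon \weights \to \mathbb{R}$ by $f(u) \coloneqq \gen{c,u} = \sum_{v \in V(G)} c(v)\,u(v)$. Since $V(G)$ is finite, $f$ is a linear functional on a finite-dimensional real vector space, and every such functional is continuous (indeed Lipschitz, with Lipschitz constant $\norm{c}$, by Cauchy--Schwarz). Having verified continuity, I would invoke~\Cref{cor:supremum} with this choice of $f$ to obtain a point $w' \in \mathcal{R}(w)$ with $f(w') = \sup_{u \in \mathcal{R}(w)} f(u) = s$, which is exactly the claimed conclusion.

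I do not expect any genuine obstacle here: all the substantive work has already been carried out in establishing that $\mathcal{R}(w)$ is compact (\Cref{thm:compactness}) and in deducing~\Cref{cor:supremum} from it. If one preferred to argue without explicitly citing~\Cref{cor:supremum}, the same conclusion follows in one line directly from compactness: by~\Cref{thm:compactness} the set $\mathcal{R}(w)$ is compact, so its image $f(\mathcal{R}(w))$ under the continuous map $f$ is a compact subset of $\mathbb{R}$, hence closed and bounded, and therefore contains its supremum $s$; any preimage of $s$ is the desired $w'$. Either route is a short proof, so the write-up would simply name the continuous functional and quote~\Cref{cor:supremum}.
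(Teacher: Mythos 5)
Your proposal is correct and matches the paper's proof exactly: the paper also applies \Cref{cor:supremum} to the function $f(u) \coloneqq \gen{c,u}$. The extra verification of continuity you supply is routine and only makes explicit what the paper leaves implicit.
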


\begin{proof}
        Apply~\Cref{cor:supremum} with~${f \colon \weights \to \mathbb{R}}$ defined by~${f(u) \coloneqq \gen{c,u}}$.
\end{proof}

\section{Conclusion}
\label{sec:discussions}

We analyzed the SAD-procedure on an arbitrary (possibly infinite) graph~$G$. 
We proved that if the support of the initial weight distribution~${w \in \weights}$ is a single vertex~$r$, then the maximum amount of tea at~${v \in V(G)}$ can be achieved after just one sharing move. 
For arbitrary initial weight distributions, the situation is much more complicated, but we proved that for finite graphs~$G$, the maximum amount of tea at~${v \in V(G)}$ can be achieved after a finite number of sharing moves. 
However, it is unclear whether the length of a shortest ${(w,v)}$-optimal sequence is bounded by a computable function of~$\abs{V(G)}$ (even if~${w \in \{0,1\}^{V(G)}}$). 
In fact, we do not know how to prove that $(w,v)$-optimal sequences exist without using the Axiom of Choice. 
As a concrete problem, we ask the following.

\begin{question}
    \label{quest:finite}
    For every finite graph~$G$, every~${w \in \weights}$, and every~${v \in V(G)}$, is there always a ${(w,v)}$-optimal sequence whose length is at most $2^{\abs{V(G)}}$?
\end{question}

One approach to answering this question in the affirmative would be to show that there is always a $(w,v)$-optimal sequence that does not repeat any sharing move. 
The following  
result demonstrates that it is not the case that there is always a $(w,v)$-optimal sequence in which no move is a subset of a later move, which helps to illustrate the difficulty of \Cref{quest:finite}.

\begin{proposition}
    \label{prop:counterexample}
    There exists a finite tree~$G$, a weight distribution ${w \in \weights}$, and a vertex~${v \in V(G)}$, such that for every ${(w, v)}$-optimal sequence of sharing moves~${(T_1, \dots, T_m)}$, there are indices~$i$ and~$j$ with~${1 \leq i < j \leq m}$ such that~${T_i \subseteq T_j}$. 
\end{proposition}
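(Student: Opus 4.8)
The plan is to exhibit an explicit finite tree~$G$, a vertex~$v$, and a weight distribution~$w$, and then to argue that the optimum~$w^*(v)$ can be realised, but \emph{only} by sequences that at some point equalise a connected set~$S$ and later equalise a strictly larger connected set~$T \supsetneq S$ with~$S \subseteq T$. The design is dictated by a simple \emph{conservation principle}: if a move~$A_S$ is immediately followed by a move~$A_T$ with~$S \subseteq T$, then~$A_S$ leaves the total~$w(T)$ unchanged and is therefore redundant, since~$A_T$ re-averages all of~$T$ regardless. Consequently a nesting can only be useful if, between the inner move on~$S$ and the outer move on~$T$, there is an intermediate move that \emph{exports} part of the concentration produced on~$S$ to a vertex \emph{outside}~$T$ (so that conservation inside~$T$ no longer makes the inner move redundant). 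I would therefore build~$G$ so that the shortest way to route tea from a heavy far vertex to~$v$ forces exactly this pattern: pool a far pair into a junction, push some of that concentrate across a branch toward~$v$, and only then enlarge the pooled region. A small caterpillar — a short central path incident to~$v$ with one or two pendant leaves at an interior junction — is the natural candidate, and the weights are chosen (and checked by a finite computation) so that the heavier mass sits deeper while a lighter mass on a shared segment must be re-routed through~$v$ between the two nested moves.

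With the example fixed, I would pin down~$w^*(v)$ in two steps. For the upper bound I would invoke~\Cref{prop:main2}, which gives~$w^*(v) \le \phi_w(w(v)) = f_{(x_1,\dots,x_k);w}(w(v))$ for the weight-sorted enumeration of~$\{x : w(x) > w(v)\}$. For the matching lower bound I would write down one explicit optimal sequence; by construction this sequence contains an inner move on~$S$, an exporting move, and an outer move on~$T \supsetneq S$, so it already witnesses a nesting. The numerical heart of this step is checking that the exported mass makes the nested sequence attain the value predicted by the bound, which is where the weights must be tuned.

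The crux — and the main obstacle — is showing that \emph{every} optimal sequence contains a nesting, i.e.\ that no nesting-free sequence attains~$w^*(v)$. By~\Cref{cor:wvoptimal} (and the compactness supplied by~\Cref{thm:compactness}) an optimal sequence exists, and since~$G$ is finite there are only finitely many possible moves, so it suffices to bound and enumerate the relevant nesting-free sequences. The strategy is to normalise an arbitrary nesting-free sequence: using the conservation principle above I would delete any move that is a subset of a later move (none remain, by hypothesis) and then use the exchange lemmas~\Cref{lem-order,lem-order2} to sort the remaining moves into a canonical order on each branch. Because nesting-freeness forbids re-enlarging a set after it has been used, each root-to-leaf path through~$v$ may be touched by an essentially nested family at most once in the "wrong" direction; I would argue that this prevents the shallow junction vertex from being lifted back to the current value of~$v$ after the exporting move, so the final~$v$-value of any nesting-free sequence is strictly smaller than the value achieved with the export-and-enlarge pattern. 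Carrying this reduction out rigorously is the delicate part, since the exchange arguments of~\Cref{lem-order,lem-order2} are stated for sharing moves along shortest paths from~$v$, whereas a general nesting-free sequence may use arbitrary connected sets; the work lies in reducing the latter to the former without introducing a nesting.

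Finally, to match the contrast drawn with the repetition-free hope in~\Cref{quest:finite}, I would verify that the forced containment can be taken to be \emph{proper}, $S \subsetneq T$, and that the exhibited optimal sequence uses each move at most once, so that the obstruction is genuinely about the subset order and not merely about repeated moves. This separation is exactly what makes the example informative about the difficulty of~\Cref{quest:finite}.
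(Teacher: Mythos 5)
Your high-level plan matches the paper's in outline: the example is indeed a five-vertex caterpillar (a path $r$--$s$--$t$--$v$ with a pendant leaf $u$ at $t$), the lower bound comes from exhibiting one explicit nested sequence, and your ``conservation principle'' correctly identifies why a nesting can only be useful if an intermediate move exports mass to a vertex outside the outer set. The genuine gap is in the crux you yourself flag: you never supply a workable method for upper-bounding what an \emph{arbitrary} nesting-free sequence can deliver to $v$. Neither of your two candidate tools closes it. Enumeration is finite in principle (nesting-freeness forbids repeated moves, so the sequence length is bounded by the number of connected subsets), but you give no reduction that makes it tractable, and no computation is carried out. The exchange lemmas \Cref{lem-order,lem-order2} govern the function $f$ built from sharing along shortest paths to $v$; as you concede, they say nothing about reordering arbitrary connected sharing moves, and no such reordering argument is provided. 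So the proposal stops exactly where the proof has to start.

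The paper's device, which is absent from your proposal, is the following. With weights $w(r)=300$, $w(s)=0$, $w(t)=144$, $w(u)=w(v)=72$, the nested sequence $(\{t,v\},\{s,t,u\},\{r,s,t,v\})$ yields $132$ at $v$. A short case analysis shows that a nesting-free optimal sequence cannot begin with $\{t,v\}$ or $\{t,u\}$ (the forbidden-superset condition then strands the tea), and that after two moves every vertex has weight at least $64$. One then subtracts the constant $64$ from all weights; the shifted distribution is checked to be $r$-feasible, sharing moves commute with adding a constant, and \Cref{prop:main} (preservation of $r$-feasibility) caps the shifted weight at $v$ by $w_2'(G)\cdot\frac{1}{4}=268\cdot\frac14=67$ for the rest of the sequence. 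Hence any nesting-free sequence ends with at most $64+67=131<132$ at $v$, a contradiction. Without this shift-and-apply-feasibility idea (or a genuine substitute for it), your argument does not establish that \emph{every} optimal sequence contains a nesting, which is the entire content of the proposition.
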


\begin{proof}
    We claim that we may take~$G$, ${w \in \weights}$, and~$v$ as in~\Cref{fig:counterexample}. 
    Let~${S_1 \coloneqq \{t,v\}}$, ${S_2 \coloneqq \{s,t,u\}}$, and~${S_3 \coloneqq \{r,s,t,v\}}$. 
    Observe that performing the sequence~${(S_1, S_2, S_3)}$ of sharing moves yields~$132$ units of tea at~$v$. Therefore, $w^*(v) \geq 132$. Towards a contradiction suppose that~${(T_1, \dots, T_m)}$ is a ${(w, v)}$-optimal sequence such that $T_i \not\subseteq T_j$ for all $1 \leq i < j \leq m$. Without loss of generality, we may also assume that~${\abs{T_i} \geq 2}$ for all~${i \in [m]}$ and~${T_{i+1} \not\subseteq T_{i}}$ for all~${i \in [m-1]}$.  For each $i \in [m]$ let $w_i$ be the weight distribution obtained from $w$ by applying the sharing moves $T_1, \dots, T_i$ (in this order).

    \begin{clm} 
        $T_1 \notin \{\{t,v\},\{t,u\}\}$.    
    \end{clm}

    \begin{subproof} 
      First suppose $T_1=\{t,v\}$.
      Since $T_1 \not\subseteq T_i$ for all $i \in [2, m]$, we conclude that $v \notin T_i$ for all $i \in [2,m]$.  Therefore, $w^*(v)=w_1(v)=108<132$, which is a contradiction.  Next suppose $T_1=\{t,u\}$.  Since $T_1 \not\subseteq T_i$ for all $i \in [2, m]$, we conclude that $w^*(v)$ can be achieved in $G[\{r,s,t,v\}]$ with initial weights given by $w_1$.  However, it is easy to check by hand that this weight distribution is $r$-feasible, and so by~\Cref{prop:mainstrong} the maximum amount of tea at $v$ in this weighted graph is always at most $\frac{w_1(\{r,s,t,v\})}{4}=120<132$, which is a contradiction.
    \end{subproof}

        \begin{clm}
        ${w_2(x) \geq 64}$ for all~${x \in V(G)}$.
        \end{clm}

        \begin{subproof}
        Suppose not.
        By the previous claim, $T_1 \notin \{\{t,v\},\{t,u\}\}$.  Thus, ${w_1(x) \geq 64}$ for all~${x \in V(G)}$, unless $T_1=\{t,u,v\}$. Hence, ${w_2(x) \geq 64}$ for all~${x \in V(G)}$, unless $T_1=\{t,u,v\}$.  It is now easy to check that we are done, unless $T_2=\{s,t\}$.  Since $T_2 \not\subseteq T_i$ for all $i \in [3,m]$, we conclude that $w^*(v)$ can be achieved in $G[\{t,u,v\}]$ with initial weights given by $w_2$. However, it is easy to see that the amount of tea at $v$ in this weighted graph is always at most $w_2(v)=96<132$, which is a contradiction.      
        \end{subproof}
        
        Let~${w_2'(x) = w_2(x)-64}$ for all~${x \in V(G)}$. 
        It is easy to check by hand that~${w_2'}$ is $r$-feasible in all possible cases of the first two sharing moves.
        Let~$w_m$ and~$w_m'$ be obtained from~$w_2$ and~$w_2'$ respectively by applying the sharing moves~${T_3, \dots, T_m}$. 
        Since~$w_2'$ is $r$-feasible, by~\Cref{prop:main} $w_m'$ is also $r$-feasible. 
        Therefore, 
        \[
            w_m'(v) 
            \leq w_m'(G)\left(1- \frac{d_G(r,v)}{d_G(r,v)+1}\right) 
            = 268 \cdot \frac{1}{4}
            = 67. 
        \]
        Note that~${w_m(v) = w_m'(v)+64 \leq 131}$. 
        However, this contradicts that~${(T_1, \dots, T_m)}$ is $(w, v)$-optimal, since $w^*(v) \geq 132$.
\end{proof}

\begin{figure}[htbp]
    \begin{center}
        \begin{tikzpicture}
            [scale=2.0]
            \tikzset{vertex/.style = {circle, draw=black, fill=white!100, inner sep=0pt, minimum width=10pt}}
            \tikzset{vertex1/.style = {circle, draw, fill=white!100, inner sep=0pt, minimum width=10pt}}
            \tikzset{edge0/.style = {line width=1pt, black}}
            
            \node [vertex, label=below:{$\scriptstyle{300}$}] (v0) at (1,0) {$\scriptstyle{r}$};
            \node [vertex, label=below:$\scriptstyle{0}$] (v1) at (2,0) {$\scriptstyle{s}$};
            \node [vertex, label=below:$\scriptstyle{144}$] (v2) at (3,0) {$\scriptstyle{t}$};
            \node [vertex1, label=below:$\scriptstyle{72}$] (v3) at (4,0) {$\scriptstyle{v}$};
            \node [vertex, label=right:$\scriptstyle{72}$] (v4) at (3,1) {$\scriptstyle{u}$};
            
            \draw[edge0] (v0) -- (v1);
            \draw [edge0] (v1) -- (v2);
            \draw [edge0] (v2) -- (v3);
            \draw[edge0] (v2) -- (v4);
        \end{tikzpicture}
    \end{center}
    \caption{The tree $G$, weight distribution $w$, and vertex $v$ from the proof of~\Cref{prop:counterexample}.}
    \label{fig:counterexample}
 \end{figure}

Finally, we remark that~\Cref{thm:main} still holds in the more general setting where agents are allowed to choose how much tea to share.  
To be precise, let~$G$ be a graph and let~${w \in \weights}$. 
A \emph{quasi-edge-sharing move} on an edge~${xy \in E(G)}$ with~${w(x) \leq w(y)}$, consists of choosing some real number~$s$ with~${0 \leq s \leq \frac{w(y)-w(x)}{2}}$ and replacing~$w$ by~$w'$, 
where~${w'(z)=w(z)}$ for all~${z \notin \{x,y\}}$, ${w'(x)=w(x)+s}$, and~${w'(y)=w(y)-s}$. 
Note that quasi-edge-sharing moves generalize edge-sharing moves since we may choose~${s = \frac{w(y)-w(x)}{2}}$. 
We say that~$w'$ is \emph{quasi-edge-reachable} from~$w$ if~$w$ can be transformed into~$w'$ via a finite sequence of quasi-edge-sharing moves. 
It turns out that quasi-edge-sharing moves are no more effective at delivering tea to a target vertex than edge-sharing moves.

\begin{prop}
    Let~$G$ be a finite graph and let~${w \in \weights}$. 
    Every weight distribution on~$G$ that is quasi-edge-reachable from~$w$ is a convex combination of weight distributions that are edge-reachable from~$w$. 
    In particular, for every vertex~${x \in V(G)}$, 
    \[
        \sup \{ w'(x) \mid w' \textnormal{ is quasi-edge-reachable from } w \} = w^*(x). 
    \]
\end{prop}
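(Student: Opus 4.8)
The plan is to exploit the fact that a single quasi-edge-sharing move acts as a \emph{linear} operator that interpolates between doing nothing and a full edge-sharing move. Fix an edge~$xy \in E(G)$ with~${w(x) \leq w(y)}$ and consider the quasi-edge-sharing move with parameter~${s \in [0, \frac{w(y)-w(x)}{2}]}$. Setting~${\lambda \coloneqq \frac{2s}{w(y)-w(x)} \in [0,1]}$ when~${w(y) > w(x)}$ (and observing that the move is the identity when~${w(y)=w(x)}$, so that any choice of~$\lambda$ is harmless), a one-line check shows that the resulting distribution equals~${\big((1-\lambda)I + \lambda A_{\{x,y\}}\big)w}$, where~$I$ is the identity and~$A_{\{x,y\}}$ is the edge-sharing matrix from~\Cref{fig:matrix}. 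Thus every quasi-edge-sharing move is, as a map~${\weights \to \weights}$, a convex combination of~$I$ and the corresponding edge-sharing operator.

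Next I would iterate this across a whole sequence of moves. Suppose~$w'$ is quasi-edge-reachable from~$w$ via moves on edges~${e_1, \dots, e_k}$ with parameters~${\lambda_1, \dots, \lambda_k \in [0,1]}$, so that
\[
    w' = \big((1-\lambda_k)I + \lambda_k A_{e_k}\big) \cdots \big((1-\lambda_1)I + \lambda_1 A_{e_1}\big)\, w.
\]
Expanding the product by multilinearity gives
\[
    w' = \sum_{S \subseteq [k]} c_S\, M_S\, w, \qquad c_S \coloneqq \prod_{i \notin S}(1-\lambda_i) \prod_{i \in S}\lambda_i,
\]
where~$M_S$ is the ordered product of the matrices~$A_{e_i}$ with~${i \in S}$ (the factors with~${i \notin S}$ being~$I$). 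The coefficients satisfy~${c_S \geq 0}$ and~${\sum_{S} c_S = \prod_{i=1}^k\big((1-\lambda_i)+\lambda_i\big) = 1}$, so this is a convex combination; and each~$M_S w$ is obtained from~$w$ by applying exactly the subsequence of edge-sharing moves indexed by~$S$, hence is edge-reachable from~$w$. This establishes the first assertion.

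For the displayed identity, the inequality~$\leq$ is immediate: since~${w'(x) = \sum_S c_S (M_S w)(x)}$ is a convex combination of the values~${(M_S w)(x) \leq w^*(x)}$, we get~${w'(x) \leq w^*(x)}$, so the supremum over quasi-edge-reachable distributions is at most~$w^*(x)$. The reverse inequality is trivial, because each edge-sharing move is the quasi-edge-sharing move with~${s = \frac{w(y)-w(x)}{2}}$, whence every edge-reachable distribution is quasi-edge-reachable and the supremum can only grow. I do not anticipate a genuine obstacle: the whole argument rests on the single reformulation in the first paragraph, after which everything is routine multilinear bookkeeping. The only points to state with care are the degenerate case~${w(y)=w(x)}$ of a move and the verification that the coefficients~$c_S$ form a probability distribution.
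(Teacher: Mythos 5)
Your proof is correct and rests on the same key observation as the paper's: a quasi-edge-sharing move with parameter $s$ acts as the operator $(1-\lambda)I + \lambda A_{\{x,y\}}$, a convex combination of the identity and the full edge-sharing operator. The paper organizes the remainder as an induction on the number of moves (peeling off the last move and distributing the edge-sharing operator over the inductively obtained convex combination), which is just the step-by-step version of your multilinear expansion, so the two arguments are essentially identical.
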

\begin{proof}
    Let~$w'$ be a weight distribution on~$G$ which is quasi-edge-reachable from~$w$, 
    and let~${u_1v_1, u_2v_2, \dots, u_tv_t}$ by a minimal sequence of edges such that~$w'$ can obtained from~$w$ by performing quasi-edge sharing moves on these edges in order. 
    We proceed by induction on~$t$. 
    If~${t = 0}$, then~${w = w'}$. 
    Otherwise, there is a weight distribution~$w_1$ which can be obtained from~$w$ by applying at most~${t-1}$ quasi-edge-sharing moves and which can be transformed into~$w'$ by performing a quasi-edge-sharing move on~${u_tv_t}$. 
    By induction, $w_1$ is a convex combination of some set~$\mathcal{W}$ of weight distributions which are edge-reachable from~$w$.
    Let~$w_2$ be the weight distribution which is obtained from~$w_1$ by performing an edge-sharing move on~${u_tv_t}$, 
    and note that~$w'$ is a convex combination of~$w_1$ and~$w_2$.
    By performing an edge-sharing move using the edge~$u_tv_t$ to each weight distribution in~$\mathcal{W}$, we can express~$w_2$ and hence~$w'$ as a convex combination of weight distributions which are edge-reachable from~$w$. 
\end{proof}

\medskip

\section*{Acknowledgements}

This work was done while visiting the Academia Sinica, Taipei for the Pacific Rim Graph Theory Group Workshop in 2024 organized by Bruce Reed. 
Tony Huynh thanks Stefan Weltge from whom he learned about the tea sharing problem at the 26th Combinatorial Optimization Workshop in Aussois, France in 2024. 
We are also grateful to Nina Gantert and Timo Vilkas for providing valuable references and background information. 

J.~Pascal Gollin was supported by the Institute for Basic Science (IBS-R029-Y3). 
Kevin Hendrey, Tony Huynh, and Sang-il Oum were supported by the Institute for Basic Science (IBS-R029-C1). 
Hao Huang was supported in part by a start-up grant
at NUS and an MOE Academic Research Fund (AcRF) Tier 1 grant of Singapore. 
Bojan Mohar was supported through the ERC Synergy grant KARST (European Union, ERC, KARST, project number 101071836).
Wei-Hsuan Yu was supported by MOST of Taiwan under Grant109-2628-M-008-002-MY4. 
Xuding Zhu was supported by National Science Foundation of China with
grant numbers NSFC 12371359 and U20A2068. 

\printbibliography

 \appendix 
\section{\texorpdfstring{
\Cref{prop:main} with general sharing moves}
{Proposition 2.1 with general sharing moves}}
\label{sec:appendix}

We now present a direct proof of~\Cref{cor:main}.  This proof does not use~\Cref{thm:main}, nor does it require taking a limit (as in the proof of~\Cref{cor:main} given in \Cref{sec:intro}).  This proof is adapted from our proof of~\Cref{prop:main}.

An \emph{integer interval} is a set of the form~${[a,b]}$, for some integers~$a$ and~$b$.
Let~$G$ be a graph and let~${r \in V(G)}$. 
Note that if~$T$ is a finite nonempty subset of~${V(G)}$ which induces a connected subgraph, then the set~${\{ d_G(r,v) \mid v \in T \}}$ is an integer interval. 
We start our first proof by an inequality on integer intervals. 
\begin{lem}
    \label{lem:ineq}
    Let~$s$, $t$ be non-negative integers such that $s+t>0$.
    Let~$x_1$, $x_2$, $\ldots$, $x_s$, $y_1$, $y_2$, $\ldots$, $y_t$ be non-negative integers 
    such that~${\{x_1,x_2,\ldots,x_s,y_1,y_2,\ldots,y_t\}}$ is an integer interval.
    Then 
    \[ 
        \frac{s}{s+t}\prod_{i=1}^s x_i \prod_{j=1}^t y_i 
        + \frac{t}{s+t}\prod_{i=1}^s (x_i+1)\prod_{j=1}^t (y_j+1) \geq \prod_{i=1}^s x_i \prod_{j=1}^t (y_j+1).
    \] 
\end{lem}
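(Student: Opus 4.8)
The plan is to pass to a normalized form and then exploit the telescoping identity $\prod_{k=a}^{b}\frac{k}{k+1}=\frac{a}{b+1}$. Write $f(z)=\frac{z}{z+1}$, set $P=\prod_{i=1}^s f(x_i)$ and $Q=\prod_{j=1}^t f(y_j)$, and divide the claimed inequality by the strictly positive quantity $\prod_{i=1}^s(x_i+1)\prod_{j=1}^t(y_j+1)$. A short rearrangement shows that the statement is equivalent to
\[
    t\,(1-P)\ \geq\ s\,P\,(1-Q).
\]
Before attacking this I would dispose of the degenerate cases: if $s=0$ or $t=0$ both sides coincide (empty products equal $1$), and if some $x_i=0$ then $P=0$ and the right-hand side vanishes. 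Hence I may assume $s,t\geq 1$ and $x_i\geq 1$ for all $i$.

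The next step is an exchange argument reducing to the sorted situation $\min_i x_i\geq\max_j y_j$. Suppose instead that $x_i<y_j$ for some $i,j$; reassign the label of $x_i$ to the $y$-family and that of $y_j$ to the $x$-family. This leaves the underlying multiset (hence the integer-interval hypothesis) and the counts $s,t$ unchanged, but since $f$ is strictly increasing it strictly increases $P$ and strictly decreases $Q$. Inspecting $t(1-P)-sP(1-Q)$: increasing $P$ lowers the left-hand side and raises the right-hand side, while decreasing $Q$ also raises the right-hand side, so this move can only decrease the margin $t(1-P)-sP(1-Q)$. It therefore suffices to prove the inequality once the values are sorted.

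In the sorted case the interval hypothesis becomes rigid. With $p\coloneqq\max_j y_j$ and $q\coloneqq\min_i x_i$, any integer strictly between $p$ and $q$ would be covered by neither family, so coverage of the interval forces $q\in\{p,p+1\}$. Thus the $y$-values fill $[m,p]$ and the $x$-values fill $[p+1,M]$ (or $[p,M]$), each value occurring at least once, where $m,M$ are the global minimum and maximum. In the clean case where every value occurs exactly once and $q=p+1$, the telescoping identity gives $P=\frac{p+1}{M+1}$ and $Q=\frac{m}{p+1}$, so that $1-P=\frac{M-p}{M+1}=\frac{s}{M+1}$ and $1-Q=\frac{t}{p+1}$, and both sides equal $\frac{st}{M+1}$: the inequality holds with equality.

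The remaining point, which I expect to be the main obstacle, is to handle repeated values, which genuinely occur in the intended application since distinct vertices of $T$ may lie at equal distance from $r$. I would finish by an induction on the number of surplus copies: deleting one extra copy of a value preserves both the interval and the sorted structure, and a direct computation shows that each deletion only tightens the inequality, so the general sorted case follows from the clean consecutive case above. The delicacy is that a deletion simultaneously changes $s$ or $t$ and rescales $P$ or $Q$, so the bookkeeping must be carried out exactly; the crude estimates $1-P\geq\max_i\frac{1}{x_i+1}$ and $1-Q\leq\sum_j\frac{1}{y_j+1}$ are too lossy (they already fail for tight consecutive examples such as $\{x_i\}=\{2,3\}$, $\{y_j\}=\{1\}$), which is precisely why the exact telescoping must be retained throughout.
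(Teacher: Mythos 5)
Your setup is sound: the normalization to $t(1-P)\geq sP(1-Q)$ is a correct restatement, the degenerate cases are correctly disposed of, and the exchange argument reducing to the configuration $\min_i x_i \geq \max_j y_j$ is valid and is essentially the same first move as the paper's (which sorts so that $y_t\leq\cdots\leq y_1\leq x_1\leq\cdots\leq x_s$ ``to maximize the right-hand side''). The equality computation in the clean consecutive case is also correct. But the proof is not complete: the entire burden of the general case rests on the claim that ``a direct computation shows that each deletion only tightens the inequality,'' and that computation is never performed. It is not routine. For instance, deleting a surplus $y$-copy of value $y$ changes the margin by $(1-P)-sPQ/y$, and deleting a surplus $x$-copy of value $x$ requires $t+s(1-Q)\geq(x+1)(1-Q)$; neither inequality is obvious, both depend on the interval structure of the remaining multiset, and one must also worry about whether the order of deletions matters. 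You also never treat the second ``clean'' case $q=p$ (where one value is shared by both families), which is where the deletions can terminate. Since you yourself flag this step as ``the main obstacle'' and concede that ``the bookkeeping must be carried out exactly,'' what you have is a plan for the hard part rather than a proof of it.

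For comparison, the paper avoids the deletion induction altogether by a monotone replacement: after sorting, since $\tfrac{x+1}{x}$ is decreasing, one may replace the sorted $x$-values by the consecutive run $x_1,x_1+1,\ldots,x_1+s-1$ (each new value is at least the old one, so the product $\prod\tfrac{x_i+1}{x_i}$ only decreases and the interval property is preserved), which telescopes to $\tfrac{x_1+s}{x_1}$; the case $x_1\leq t$ is then immediate, and otherwise the $y$-values are similarly pushed down to $x_1-1,\ldots,x_1-t$ and the two telescoping products give exact equality. Adapting that device --- spreading duplicated values out into a consecutive block rather than deleting them one at a time --- would close your gap; as written, the argument is incomplete.
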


\begin{proof}
    We may assume that $s,t>0$.
    Since permuting values of~${(x_1,\ldots,x_s,y_1,\ldots,y_t)}$ does not change the left-hand side, 
    we may assume that~${y_t \leq \cdots \leq y_1 \leq x_1 \leq \cdots \leq x_s}$ to maximize the right-hand side. 
    We may also assume that~${x_i > 0}$ for all~$i$. 
    By dividing both sides by~${\prod_{i=1}^s x_i \prod_{j=1}^t (y_j+1)}$, it is enough to prove that 
    \[ 
        \frac{s}{s+t}\prod_{j=1}^t \frac{y_i}{y_j+1}
        + \frac{t}{s+t}\prod_{i=1}^s \frac{x_i+1}{x_i} \geq 1. 
    \] 
    Since~${\frac{1+x}{x}}$ is a decreasing function for~${x > 0}$, we may assume that~${x_i = x_1 + i - 1}$ by increasing values of~$x_i$ whenever possible, as long as~${\{ x_1, x_2, \ldots, x_s, y_1, y_2, \ldots, y_t \}}$ is an integer interval.
    Therefore, 
    \[
        \prod_{i=1}^s \frac{x_i+1}{x_i} = \frac{x_1+s}{x_1}. 
    \]

    If~${x_1 \leq t}$, then~${\frac{t}{s+t} \frac{x_1+s}{x_1} \geq 1}$. 
    Therefore we may assume that~${x_1 > t}$. 

    Since~${\frac{y}{1+y}}$ is an increasing function for~${y > 0}$, we may assume that~${y_i = x_1 - i}$ by decreasing values of~$y_i$ whenever possible, as long as~${\{ x_1, x_2, \ldots, x_s, y_1, y_2, \ldots, y_t \}}$ is an integer interval. 
    Then 
    \[ 
        \prod_{i=1}^t \frac{y_i}{y_i+1} = \frac{x_1-t}{x_1}. 
    \]
    Now we deduce that 
    \(
        \frac{s}{s+t} \frac{x_1-t}{x_1}+ \frac{t}{s+t} \frac{x_1+s}{x_1} = 1
    \). 
    This proves the inequality. 
\end{proof}

We are now ready to prove the following strengthened version of~\Cref{prop:main}.

\begin{proposition}
    \label{prop:mainstrong}
    Let $G$ be a connected graph, let~${r \in V(G)}$, and let~${w_0 \in \weights}$ be $r$-feasible.  
    Then every~${w \in \weights}$ on~$G$ that is reachable from~$w_0$ is $r$-feasible. 
\end{proposition}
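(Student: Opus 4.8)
The plan is to run the same induction as in the proof of \Cref{prop:main}, but to replace its elementary two-variable estimate by the interval inequality of \Cref{lem:ineq}. First I would normalize so that $w_0(G)=1$ and induct on the length of the sequence of sharing moves realizing reachability; since every sharing move preserves the total weight, it suffices to treat a single move. So assume $w'$ is $r$-feasible with $w'(G)=1$, assume $w$ is obtained from $w'$ by sharing on a finite connected set $T$ with $\abs{T}=k$, and fix an arbitrary finite $S\subseteq V(G)$. Writing $S_0\coloneqq S\setminus T$ and $a\coloneqq\abs{S\cap T}$, the sharing move sets $w(z)=w'(T)/k$ for every $z\in T$ and leaves all other weights unchanged, so $w(S)=w'(S_0)+\tfrac{a}{k}\,w'(T)$.

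The crucial step is to recognize $w(S)$ as a convex combination of $w'$ evaluated at the two \emph{extreme} sets obtained from $S$ by placing either none or all of $T$ inside. Using $w'(S_0\cup T)=w'(S_0)+w'(T)$, one checks the identity
\[
    w(S)=\Bigl(1-\tfrac{a}{k}\Bigr)w'(S_0)+\tfrac{a}{k}\,w'(S_0\cup T).
\]
Applying $r$-feasibility of $w'$ to both $S_0$ and $S_0\cup T$, using $\rho(S_0\cup T)=\rho(S_0)\rho(T)$ and factoring out $\rho(S_0)$, reduces the target bound $w(S)\le 1-\rho(S)$ to the scalar inequality $1-\rho(S\cap T)\ge \tfrac{a}{k}\bigl(1-\rho(T)\bigr)$. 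The degenerate case $\rho(S_0)=0$, i.e. $r\in S_0$, is trivial, since then $\rho(S)=0$ and $w(S)\le w(G)=1$.

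Finally I would identify this scalar inequality with \Cref{lem:ineq}. Since $T$ induces a connected subgraph, the distances $\{d_G(r,z):z\in T\}$ form an integer interval. Taking the distances of the vertices of $S\cap T$ as the $x_i$ (so $s=a$) and those of $T\setminus S$ as the $y_j$ (so $t=k-a$), and dividing the inequality of \Cref{lem:ineq} by $\prod_i(x_i+1)\prod_j(y_j+1)$, yields exactly $\tfrac{a}{k}\rho(S\cap T)\rho(T\setminus S)+\bigl(1-\tfrac{a}{k}\bigr)\ge \rho(S\cap T)$, which rearranges to the required bound because $\rho(T)=\rho(S\cap T)\rho(T\setminus S)$. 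This closes the induction, and since all the sets involved are finite the argument works verbatim for infinite $G$, giving \Cref{cor:main} directly.

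The main obstacle is choosing the correct convex combination. Averaging $w'(S_0\cup U)$ uniformly over all $a$-subsets $U\subseteq T$ looks natural but points the wrong way: it forces a comparison of $\rho(S\cap T)$ against the \emph{average} of $\rho(U)$, and this fails exactly when $S\cap T$ consists of the vertices of $T$ farthest from $r$ (where $\rho$ is maximal). The two-point ``all-out/all-in'' combination above sidesteps this, and, crucially, its asymmetry lines up with the asymmetric roles of the $x_i$ and $y_j$ in \Cref{lem:ineq}; pinning down that this particular split of the interval is the one matching \Cref{lem:ineq} is the real content of the step.
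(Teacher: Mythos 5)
Your proof is correct and follows essentially the same route as the paper's: the same induction on the number of sharing moves, the same identity expressing $w(S)$ as the convex combination $\tfrac{s}{s+t}\,w'(S\cup T)+\tfrac{t}{s+t}\,w'(S\setminus T)$ (your $(1-\tfrac{a}{k})w'(S_0)+\tfrac{a}{k}w'(S_0\cup T)$ with $a=s$, $k=s+t$), and the same invocation of \Cref{lem:ineq} with the $x_i$ ranging over $S\cap T$ and the $y_j$ over $T\setminus S$. Your explicit treatment of the degenerate cases where $\rho$ vanishes is a harmless addition; nothing essential differs.
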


\begin{proof}
    By normalizing, we may assume that~${w_0(G) = 1}$. 
    Let~${(w_0, \dots, w_t)}$ be a sequence of weight distributions on~$G$ such that~${w_t = w}$ and~$w_i$ is obtained from~$w_{i-1}$ by a single sharing move for each~${i \in [t]}$. 
    We proceed by induction on~$t$. 
    We may assume that~${t \geq 1}$; 
    otherwise there is nothing to prove. 
    Let~${w' = w_{t-1}}$. 
    By induction, $w'$ is $r$-feasible. 
    Suppose that~$w$ is obtained from~$w'$ by sharing on a finite nonempty set~${T \subseteq V(G)}$. 
    
    We may assume that~${r \notin S}$. 
    Let~${s = \abs{S \cap T}}$ and~${t = \abs{T-S}}$. 
    By the definition of a sharing move, 
    \(
        {w(S) 
            = \frac{s}{s+t} w'(T) + w'(S-T) 
            = \frac{s}{s+t} w'(S\cup T) + \frac{t}{s+t} w'(S-T)}
    \). 
    For a finite set~${X \subseteq V(G)}$, recall~${\rho(X)=\prod_{x \in X} \frac{d_G(r,x)}{d_G(r,x)+1}}$. 
    Since~$w'$ satisfies~(\ref{eq:set}), 
    \[
        0 \leq w'(S\cup T) \leq 1 - \rho(S \cup T)
        \qquad\text{and}\qquad 
        0 \leq w'(S-T) \leq 1 - \rho(S-T). 
    \]
    We deduce that 
    \begin{align*}
        w(S)
        &\leq \frac{s}{s+t} \left( 1 - \rho(S \cup T) \right) + \frac{t}{s+t} \left( 1 - \rho(S-T) \right) \\
        &= 1 - \left( \frac{s}{s+t} \rho(S \cup T) + \frac{t}{s+t} \rho(S-T) \right) \\ 
        &= 1 - \rho(S) \left( \frac{s}{s+t} \rho(T-S) + \frac{t}{s+t} \frac{1}{\rho(T \cap S)} \right)\\
        &\leq 1 - \rho(S), 
    \end{align*}
    where the last inequality follows from \Cref{lem:ineq}. 
    Thus, $w$ is also $r$-feasible.  
\end{proof}

\section{\texorpdfstring
{\Cref{lem:key-proof2} with general sharing moves}
{Lemma 3.4 with general sharing moves}}
\label{sec:appendix2}

We now present a stronger variant of~\Cref{lem:key-proof2} with general sharing moves.
This will allow us to prove \cref{prop:main2} without needing to take a limit. 
Throughout this appendix,~$G$ is a fixed finite connected graph with $n$ vertices and $v$ is a fixed vertex of $G$. We use the same notation as in~\Cref{sec:second}.

\begin{lem}
    \label{lem-ineq}
    Let $t\ge2$ be an integer
    and 
    let $d_1,d_2,\ldots,d_t$ be positive integers such that $\{ d_1,d_2,\ldots,d_t\}$ is an integer interval. 
    Let $w_1,w_2,\ldots,w_t$ be non-negative reals such that 
    $w_i\le w_{i+1}$ for all $i\in [t-1]$.
    Then
    \[ 
    \sum_{i=1}^t \left(\prod_{j=1}^{i-1} (1+d_j)\right)\left( \prod_{k=i+1}^t d_k\right)w_i
    \ge \left( \sum_{i=1}^t \left(\prod_{j=1}^{i-1} (1+d_j)\right)\left( \prod_{k=i+1}^t d_k\right)\right)\frac1t \sum_{i=1}^t w_i .
    \] 
\end{lem}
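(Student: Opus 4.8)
The plan is to abbreviate the coefficient as $c_i \coloneqq \left(\prod_{j=1}^{i-1}(1+d_j)\right)\left(\prod_{k=i+1}^{t} d_k\right)$ and set $C \coloneqq \sum_{i=1}^{t} c_i$, so that the claim reads $\sum_{i=1}^{t} c_i w_i \ge \frac{C}{t}\sum_{i=1}^{t} w_i$, i.e.\ $\sum_{i=1}^{t}(c_i-\tfrac{C}{t})w_i \ge 0$. Since $\sum_{i=1}^{t}(c_i-\tfrac{C}{t})=0$ and the $w_i$ are nondecreasing, I would first apply summation by parts: writing $w_i = w_1 + \sum_{s=2}^{i}(w_s-w_{s-1})$ with all increments nonnegative, the inequality follows once every tail sum $\sum_{i=s}^{t}(c_i-\tfrac{C}{t})$ is nonnegative. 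As $\sum_{i=1}^{t}c_i=C$, this is equivalent to the family of prefix inequalities
\begin{equation}
    \sum_{i=1}^{\ell} c_i \;\le\; \frac{\ell}{t}\,C
    \qquad\text{for all } \ell \in [t-1].
    \tag{$\star$}
\end{equation}
This reduction is mechanical and isolates the content in $(\star)$.

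To prove $(\star)$ I would invoke the telescoping identity $\prod_i(1+a_i)-\prod_i a_i = \sum_i \left(\prod_{j<i}(1+a_j)\right)\left(\prod_{k>i}a_k\right)$, which yields the closed forms $C=\prod_{i=1}^{t}(1+d_i)-\prod_{i=1}^{t}d_i$ and $\sum_{i=1}^{\ell}c_i = \left(\prod_{k=\ell+1}^{t}d_k\right)\left(\prod_{j=1}^{\ell}(1+d_j)-\prod_{j=1}^{\ell}d_j\right)$. Writing $A=\prod_{j\le\ell}d_j$, $B=\prod_{j\le\ell}(1+d_j)$, $A'=\prod_{k>\ell}d_k$, $B'=\prod_{k>\ell}(1+d_k)$ and dividing by $A'B>0$, inequality $(\star)$ rearranges to $\ell\,\frac{B'}{A'}+(t-\ell)\,\frac{A}{B}\ge t$, that is,
\[
    \ell P + (t-\ell)/Q \ge t,
    \qquad
    P \coloneqq \prod_{k>\ell}\frac{1+d_k}{d_k}, \quad
    Q \coloneqq \prod_{j\le\ell}\frac{1+d_j}{d_j}.
\]

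The key observation is that the product $PQ=\prod_{i=1}^{t}\frac{1+d_i}{d_i}$ is independent of how the values $d_i$ are split between the head $\{d_1,\dots,d_\ell\}$ and the tail $\{d_{\ell+1},\dots,d_t\}$; denote this constant by $G$. Substituting $1/Q=P/G$ rewrites the left-hand side as $P\bigl(\ell+(t-\ell)/G\bigr)$, a positive multiple of $P$, so for the given ordering it is bounded below by its value at the split minimizing $P$. As $P$ is a product of $t-\ell$ factors $1+\tfrac{1}{d_k}$, each decreasing in $d_k$, this minimum is attained when the tail holds the $t-\ell$ largest elements of the interval. This is where — and the only place where — the integer-interval hypothesis enters decisively: writing the interval as $\{m,m+1,\dots,m+t-1\}$, the extremal split (smallest $\ell$ values in the head) makes $P$ and $Q$ telescope to $P=\frac{m+t}{m+\ell}$ and $Q=\frac{m+\ell}{m}$, whence $\ell P+(t-\ell)/Q=\frac{\ell(m+t)+(t-\ell)m}{m+\ell}=t$ exactly. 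Thus the minimum over all splits equals $t$, giving $\ell P+(t-\ell)/Q\ge t$ for the original ordering; this is $(\star)$, and the lemma follows.

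I expect the main obstacle to be this final extremal step: recognizing that the two products must be compared through the invariant $G=PQ$ rather than bounded separately (the latter is too lossy and in fact fails for unfavourable orderings of the $d_i$), and then verifying that the integer-interval structure forces the extremal value to be \emph{exactly} $t$ via telescoping. By contrast, the summation-by-parts reduction to $(\star)$ and the telescoping closed forms are routine.
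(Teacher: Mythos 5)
Your reduction is clean and most of it is sound: the Abel-summation step correctly reduces the lemma to the prefix inequalities $(\star)$, the telescoping closed forms for $C$ and $\sum_{i\le\ell}c_i$ are right, the reformulation as $\ell P+(t-\ell)/Q\ge t$ checks out, and the observation that $PQ=G$ is split-invariant (so one only needs the split putting the $t-\ell$ largest values in the tail) is a nice idea. But the final step — the one you yourself identify as carrying all the content — has a genuine gap. The hypothesis is only that the \emph{set} $\{d_1,\dots,d_t\}$ is an integer interval; the $d_i$ may repeat (and in the application, where $d_j=d_G(v,v_{s+j-1})$ over a connected vertex set $T$, they typically do). Your computation writes the interval as $\{m,m+1,\dots,m+t-1\}$, i.e.\ assumes the $d_i$ are distinct, and the claimed exact telescoping to $t$ is false with repetitions: for $t=2$, $d_1=d_2=1$, $\ell=1$, the extremal split gives $P=Q=2$ and $\ell P+(t-\ell)/Q=5/2\ne 2$. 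The inequality $\ge t$ does still hold, but your proof of it does not establish it. The step is repairable: with $e_1\le\cdots\le e_t$ the sorted values, the interval hypothesis gives $e_k\le e_\ell+(k-\ell)$ for $k>\ell$ and $e_j\ge e_\ell-(\ell-j)$ for $j\le\ell$, so termwise monotonicity yields $P\ge\frac{e_\ell+t-\ell+1}{e_\ell+1}$ and $1/Q\ge\frac{e_\ell-\ell+1}{e_\ell+1}$ (the latter trivially when the right side is nonpositive), whence $\ell P+(t-\ell)/Q\ge\frac{t(e_\ell+1)}{e_\ell+1}=t$. As written, though, your argument does not cover the lemma as stated.

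For comparison, the paper's proof is shorter and avoids the extremal analysis entirely: it first notes the right-hand side is invariant under permuting the $d_i$, shows by an adjacent-swap computation that sorting the $d_i$ into nondecreasing order does not increase the left-hand side, then uses the interval hypothesis only in the form $d_{i+1}\le 1+d_i$ to conclude that the coefficients $c_i$ are nondecreasing along with the $w_i$, and finishes with Chebyshev's sum inequality. Your route, once patched, is a legitimate alternative — it makes the equality cases more transparent (the prefix inequalities are exactly the $0$--$1$ weight specializations of the lemma) — but it is longer and the patch above is essentially forced on you anyway.
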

\begin{proof}
    First, it is easy to see that 
    $\sum_{i=1}^t \prod_{j=1}^{i-1} (1+d_j) \prod_{k=i+1}^t d_k= \prod_{j=1}^t (1+d_j)-\prod_{k=1}^t d_k$
    and therefore permuting the values of $(d_1,d_2,\ldots,d_t)$ does not change the right-hand side of the inequality.

    We claim that we may assume that $d_\ell\le d_{\ell+1}$ for all $\ell\in [t-1]$. Suppose $d_\ell>d_{\ell+1}$ for some $\ell\in [t-1]$. 
    Observe that 
    \[ 
    (d_{\ell+1}w_\ell + (1+d_\ell)w_{\ell+1} )
    -(d_{\ell}w_\ell + (1+d_{\ell+1})w_{\ell+1})
    = (d_{\ell+1}-d_\ell)(w_\ell -w_{\ell+1})\ge 0
    \] 
    and therefore swapping the values of $d_\ell$ and $d_{\ell+1}$ does not increase the left-hand side.
    This proves the claim.

    Now observe that 
    $\left(\prod_{j=1}^{i-1} (1+d_j) \prod_{k=i+1}^t d_k\right) \le \left(\prod_{j=1}^{i} (1+d_j) \prod_{k=i+2}^t d_k\right)$ for each $i\in[t-1]$, 
    because $d_{i+1}\le 1+d_i$.
    By Chebyshev's sum inequality, we deduce the desired inequality.
\end{proof}
We are now ready to prove the following strengthened version of~\Cref{lem:key-proof2}.

\begin{lem} \label{lem:strongphi}
    Let~${w \in \weights}$ and let~$w'$ be the weight distribution on~$G$ obtained from~$w$ by sharing on a finite connected set $T$ of vertices.
    Then~${\phi_{w'}(w'(v)) \leq \phi_{w}(w(v))}$. 
\end{lem}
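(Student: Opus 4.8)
The plan is to mirror the proof of \Cref{lem:key-proof2}, replacing the two\dash vertex exchange by a \emph{block\dash replacement} step powered by \Cref{lem-ineq}. Write $\overline w \coloneqq w(T)/\abs{T}$, so that $w'$ assigns the value $\overline w$ to every vertex of $T$. Fix the enumeration $\sigma$ of $V(G)\setminus\{v\}$ sorting vertices by increasing $w'$\dash value; since the vertices of $T$ other than $v$ all receive the common value $\overline w$, they occupy a consecutive block $B$ of $\sigma$. By \Cref{lem-order2}, $\phi_{w'}(w'(v))=f_{(\cdot);w'}(w'(v))$, where the sequence runs over precisely the vertices whose $w'$\dash value exceeds $w'(v)$, in increasing order (ties contribute trivially). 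The engine of the argument is the following reinterpretation of \Cref{lem-ineq}: if $C\subseteq V(G)\setminus\{v\}$ has distances $\{d_G(x,v):x\in C\}$ forming an integer interval of positive integers, then for every $a\in\mathbb{R}_{\geq0}$, processing $C$ with its true values in increasing order of value dominates processing $C$ with every value replaced by the average $w(C)/\abs{C}$; that is, with $w(c_1)\le\cdots\le w(c_{\abs{C}})$ one has $f_{(c_1,\dots,c_{\abs{C}});w}(a)\geq \alpha_C\, a+(1-\alpha_C)\tfrac{w(C)}{\abs{C}}$, where $\alpha_C\coloneqq\prod_{x\in C}\tfrac{d_G(x,v)}{d_G(x,v)+1}$. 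After clearing the common denominator $\prod_{x\in C}(d_G(x,v)+1)$, this is exactly \Cref{lem-ineq} with $d_i=d_G(c_i,v)$.

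For the case $v\notin T$ we have $w'(v)=w(v)$. If $\overline w\le w(v)$, then no vertex of $T$ is processed in $\phi_{w'}(w'(v))$, so this equals $f_{(\cdot);w}(w(v))$ for a sequence of non\dash$T$ vertices, whence it is at most $\phi_w(w(v))$ because $\phi_w(w(v))$ dominates $f_{(\cdot);w}(w(v))$ for every sequence. If $\overline w>w(v)$, I would split the processed sequence as $P\cdot B\cdot Q$, where $P$ (resp.\ $Q$) lists the non\dash$T$ vertices of value in $(w(v),\overline w)$ (resp.\ greater than $\overline w$) and $B=T$ is the block. As $P$ and $Q$ carry identical values under $w$ and $w'$, replacing $B$ by the same vertices with their true values in increasing order and invoking the block\dash replacement inequality (the distances on $T$ form a positive integer interval since $T$ is connected and $v\notin T$) together with monotonicity of $f$ in its argument (\Cref{lem-increasing}) yields $f_{(P\cdot B'\cdot Q);w}(w(v))\geq f_{(P\cdot B\cdot Q);w'}(w(v))=\phi_{w'}(w'(v))$, while the left side is at most $\phi_w(w(v))$.

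For the case $v\in T$ we have $w'(v)=\overline w$ and $\phi_{w'}(\overline w)=f_{(Q);w}(\overline w)$, where $Q$ lists the non\dash$T$ vertices of value greater than $\overline w$ in increasing order. If $\overline w<w(v)$, monotonicity gives $f_{(Q);w}(\overline w)\le f_{(Q);w}(w(v))\le\phi_w(w(v))$. The crux is $\overline w\ge w(v)$: here I would show that processing $B\coloneqq T\setminus\{v\}$ (true values, increasing order) already lifts $v$ to at least $\overline w$, i.e.\ $f_{(B);w}(w(v))\ge\overline w$. The block\dash replacement inequality for $B$ (distances with support $\{1,\dots,D\}$, a positive integer interval) reduces this to $\alpha\,w(v)+(1-\alpha)\overline w_B\ge\overline w$, where $\alpha\coloneqq\alpha_B$ and $\overline w_B=\tfrac{\abs{T}\,\overline w-w(v)}{\abs{T}-1}$. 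A short computation shows both coefficients equal $\tfrac{\alpha\abs{T}-1}{\abs{T}-1}$, so the inequality collapses to $(w(v)-\overline w)\bigl(\alpha\abs{T}-1\bigr)\ge0$; since $w(v)\le\overline w$, this is equivalent to $\alpha\le 1/\abs{T}$. Finally $\alpha\le1/\abs{T}$ follows from an elementary estimate: as $D\le\abs{T}-1$ one has $\alpha\le\tfrac{1}{D+1}\bigl(\tfrac{D}{D+1}\bigr)^{\abs{T}-1-D}$, and Bernoulli's inequality $\bigl(1+\tfrac1D\bigr)^{k}\ge1+\tfrac{k}{D}\ge1+\tfrac{k}{D+1}$ gives the bound. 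Having established $f_{(B);w}(w(v))\ge\overline w$, monotonicity yields $f_{(B\cdot Q);w}(w(v))=f_{(Q);w}\bigl(f_{(B);w}(w(v))\bigr)\ge f_{(Q);w}(\overline w)=\phi_{w'}(w'(v))$, and the left side is at most $\phi_w(w(v))$.

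The main obstacle is precisely the case $v\in T$ with $\overline w\ge w(v)$: unlike the two\dash vertex exchange of \Cref{lem:key-proof2}, processing $T\setminus\{v\}$ along shortest paths need not reproduce the average $\overline w$ at $v$ (it does so for a path but overshoots for a star), so one cannot rely on the exact identity used there and must instead prove the one\dash sided bound $f_{(T\setminus\{v\});w}(w(v))\ge\overline w$. The pleasant surprise is the cancellation reducing it to the distance inequality $\alpha\le1/\abs{T}$, which holds with equality exactly when $T$ is a shortest path from $v$.
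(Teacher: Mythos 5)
Your proof is correct and follows essentially the same route as the paper's: the same case split on whether $v\in T$, the same use of \Cref{lem-ineq} as a ``true values beat averaged values'' block inequality, and in the case $v\in T$ the same reduction to $f_{(T\setminus\{v\});w}(w(v))\ge \overline w$ via the bound $\prod_{x\in T\setminus\{v\}}\tfrac{d_G(x,v)}{d_G(x,v)+1}\le 1/\abs{T}$. The only substantive difference is that you supply an explicit Bernoulli-type proof of that last distance inequality, which the paper asserts without proof.
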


\begin{proof}
    Let~${\sigma=(v_1, \ldots, v_{n-1})}$ be an enumeration of $V(G)\setminus \{v\}$ such that~${w'(v_i) \leq w'(v_{i+1})}$ for all~${i \in [n-2]}$. 
    By \Cref{lem-order2}, ${\phi_{w'}(w'(v)) = \phi_{\ell, w'}(\sigma, w'(v))}$, where
    \[ \ell\coloneq 1+\abs{\{ i\in[n-1]\mid w'(v_i)<w'(v)\}}.\]
    We may assume that $\abs{T}>1$.

    First, consider the case~${v \notin T}$. 
    Then~${w(v) = w'(v)}$. 
    Since~${w'(x) = \frac{w(T)}{\abs{T}}}$ for every vertex~$x\in T$, we may assume that~${T=\{v_s,v_{s+1},\ldots,v_{s+t-1}\}}$ for some~${s \in [n-t]}$ with $t=\abs{T}$.
    We may also assume that~${w(v_{s+j-1}) \leq w(v_{s+j})}$ for all $j\in [t-1]$.
    We will show that~${\phi_{\ell, w'}(\sigma, w(v)) \leq \phi_{\ell, w}(\sigma, w(v))}$. 
    If~${s < \ell}$, then since~${w(v_j) = w'(v_j)}$ for all~${j \geq \ell}$, it follows that ${\phi_{\ell, w'}(\sigma, w(v)) = \phi_{\ell, w}(\sigma, w(v))}$. 
    So we may assume that~${\ell \leq s}$. 
    Let 
    \[
        a \coloneqq f_{(v_{\ell}, v_{\ell+1}, \ldots, v_{s-1}); w}(w(v)) = f_{(v_{\ell}, v_{\ell+1}, \ldots, v_{s-1}); w'}(w'(v)). 
    \]
    It suffices to show that~${f_{(v_s, v_{s+1},\ldots,v_{s+t-1}); w'}(a) \leq f_{(v_s, v_{s+1},\ldots,v_{s+t-1}); w}(a)}$. 
    For each $j\in [t]$, let $d_j\coloneqq d_G(v,v_{s+j-1})$
    and $w_j\coloneqq w(v_{s+j-1})$.
    Then, by \cref{lem-ineq}, we have 
    \begin{align*}
        f_{(v_s, v_{s+1},\ldots,v_{s+t-1});w}(a) 
        &= \left(\prod\limits_{i=1}^t\frac{d_i}{d_i+1} \right)a 
        + \sum_{i=1}^t \left(\prod\limits_{j=i+1}^t \frac{d_j}{d_j+1} \right)\frac{1}{d_i+1} w_i\\
        &=\left(\prod\limits_{i=1}^t\frac{d_i}{d_i+1} \right)a 
        + \frac{
        \sum\limits_{i=1}^t \left(\prod\limits_{j=1}^{i-1}(1+d_j)\right)\left(\prod\limits_{k=i+1}^t d_k \right) w_i }
        {\prod\limits_{i=1}^t (d_i+1)}\\
        &\ge \left(\prod\limits_{i=1}^t\frac{d_i}{d_i+1} \right)a 
        +  \frac{
        \sum\limits_{i=1}^t \left(\prod\limits_{j=1}^{i-1}(1+d_j)\right)\left(\prod\limits_{k=i+1}^t d_k \right) }
        {\prod\limits_{i=1}^t (d_i+1)}\frac{\sum\limits_{i=1}^t w_i}{t} \\
        &= f_{(v_s, v_{s+1},\ldots,v_{s+t-1});w'}(a).
    \end{align*}

    So the only case left to consider is when~${v\in T}$. 
    Since~${w'(x) = \frac{w(T)}{\abs{T}}}$ for every vertex~$x\in T$, we may assume that~${T=\{v,v_s,v_{s+1},\ldots,v_{s+t-2}\}}$ for some~${s \in [n-t+1]}$ with $t=\abs{T}$.
    We may also assume that~${w(v_{s+j-1}) \leq w(v_{s+j})}$ for all $j\in [t-2]$.
    By \Cref{lem-order2}, 
    \begin{align*}
        \phi_{w'}(w'(v)) 
        &= \phi_{s+t-1, w'}(\sigma, w'(v)) \\
        &= f_{(v_{s+t-1},v_{s+t-2},\ldots,v_{n-1});w'}(w'(v)) \\
        &= f_{(v_{s+t-1},v_{s+t-2},\ldots,v_{n-1});w}(w'(v))
        =\phi_{s+t-1,w}(\sigma,w'(v)).
    \end{align*}
    Since $\phi_{s+t-1,w}$ is strictly increasing, we may assume that $w(v)<w'(v)$, because otherwise $\phi_{w'}(w'(v))= \phi_{s+t-1,w}(\sigma,w'(v))\le \phi_{s+t-1,w}(\sigma,w(v))\le \phi_w(w(v))$.
    For each $j\in [t-1]$, let $d_j\coloneqq d_G(v,v_{s+j-1})$ and $w_j\coloneqq w(v_{s+j-1})$.
    Again, by \cref{lem-ineq}, we have 
    \begin{align*}
        f_{(v_s, v_{s+1},\ldots,v_{s+t-2});w}(w(v)) 
        &= \left(\prod_{i=1}^{t-1}\frac{d_i}{d_i+1} \right)w(v) 
        + \sum_{i=1}^{t-1} \left(\prod_{j=i+1}^{t-1} \frac{d_j}{d_j+1} \right)\frac{1}{d_i+1} w_i\\
        &=\left(\prod \limits_{i=1}^{t-1}\frac{d_i}{d_i+1} \right)w(v)
        + \frac{
        \sum \limits_{i=1}^{t-1} \left(\prod \limits_{j=1}^{i-1}(1+d_j)\right)\left(\prod \limits_{k=i+1}^{t-1} d_k \right) w_i }
        {\prod\limits_{i=1}^{t-1} (d_i+1)}\\
        &\geq \left(\prod_{i=1}^{t-1}\frac{d_i}{d_i+1} \right)w(v) 
        +  \frac{
        \sum \limits_{i=1}^{t-1} \left(\prod \limits_{j=1}^{i-1}(1+d_j)\right)\left(\prod \limits_{k=i+1}^{t-1} d_k \right) }
        {\prod \limits_{i=1}^{t-1} (d_i+1)}\frac{\sum \limits_{i=1}^{t-1} w_i}{t-1} \\
        &=  \left(\prod\limits_{i=1}^{t-1}\frac{d_i}{d_i+1} \right)w(v) 
        +  \left(1-\prod\limits_{i=1}^{t-1}\frac{d_i}{d_i+1} \right)
        \frac{\sum\limits_{i=1}^{t-1} w_i}{t-1}.
    \end{align*}
    Since $\{d_1,d_2,\ldots,d_{t-1}\}$ is an integer interval containing $1$,  we have 
    $\prod_{i=1}^{t-1}\frac{d_i}{d_i+1} \le \frac{1}{t}$.
    As $w(v)<w'(v)=
    \frac1t (w(v)+\sum_{i=1}^{t-1} w_i)$, we have $w(v)<\frac{\sum_{i=1}^{t-1}w_i}{t-1}$.
    Therefore, 
    \[ 
        \left(\prod_{i=1}^{t-1}\frac{d_i}{d_i+1} \right)w(v) 
        +  \left(1-\prod_{i=1}^{t-1}\frac{d_i}{d_i+1} \right)
        \frac{\sum \limits_{i=1}^{t-1} w_i}{t-1}
        \geq \frac{1}{t} w(v)+ \left(1-\frac1t\right) \frac{\sum \limits_{i=1}^{t-1} w_i}{t-1} = w'(v).
    \] 
    This implies that $f_{(v_s, v_{s+1},\ldots,v_{s+t-2});w}(w(v)) \geq w'(v) = f_{(v_s, v_{s+1},\ldots,v_{s+t-2});w'}(w'(v))$ and therefore 
    $\phi_w(w(v))\ge \phi_{s,w}(\sigma,w(v))\ge \phi_{s,w'}(\sigma,w'(v))=\phi_{w'}(w'(v))$, 
    where the last equality follows from \cref{lem-order2}.
\end{proof}
\color{black}

Combining~\Cref{lem:strongphi} with~\Cref{lem-increasing},~\Cref{lem-order}, and~\Cref{lem-order2} gives the desired proof of~\Cref{prop:main2}.

\end{document}